\newtheorem{defi}{Definition}[section]
\newtheorem{lemma}[defi]{Lemma}
\newtheorem{theorem}[defi]{Theorem}
\newtheorem{kor}[defi]{Corollary}
\newtheorem{rem}[defi]{Remark}
\newtheorem{ex}[defi]{Example}
\begin{document}
\title[Characterization of continuous stationary fields]{Characterization of continuous stationary fields as generalized Ornstein-Uhlenbeck fields via multi-parameter Langevin equation and multiple Riemann-Stieltjes integration}

	\author[Voutilainen]{Marko Voutilainen}
	\address{Marko Voutilainen,
		Department of Accounting and Finance,
		University of Turku,
		20014 Turun yliopisto,
		Finland}
		\email{marko.voutilainen@utu.fi}
	\author[Ilmonen]{Pauliina Ilmonen}
	\address{Pauliina Ilmonen,
		Department of Mathematics and Systems Analysis,
		Aalto University School of Science,
		00076 Aalto,
		Finland}
		\email{pauliina.ilmonen@aalto.fi}
	\author[Viitasaari]{Lauri Viitasaari}
	\address{Lauri Viitasaari,
		Department of Information and Service Management, Aalto University School of Business, 00076 Aalto, Finland}
		\email{lauri.viitasaari@aalto.fi}

		\keywords{Generalized Ornstein-Uhlenbeck fields, Langevin equations, Self-similar fields, Stationary fields, Stationary increment fields, Multiple Riemann-Stieltjes integration}
		\subjclass[2020]{60G05, 60G60, 60G10, 60G18, 26B99}
		\date{\today}
		
		\begin{abstract}
			In this article, we characterize continuous stationary fields via generalized Langevin dynamics. This gives natural connections between stationary fields, stationary increment fields, self-similar fields, and generalized Langevin dynamics. Our contribution extends some recently proved similar results for stochastic processes to the case of continuous random fields. As a by-product, we introduce some new results on multiple Riemann-Stieltjes integrals.
		
		\end{abstract}

\maketitle

\section{Introduction}
Random phenomena evolving in time are often modeled using stochastic processes $X=\{X(t)\}_{t\in T}$, where $T$ is a subset of the one-dimensional Euclidean space, and the role of modeling is to assess the statistical characteristics of the process $X$. Important special classes of such processes include stationary processes, self-similar processes, and processes with stationary increments, as each of these classes allows powerful tools for modeling and understanding random phenomena across diverse fields. Indeed, stationary processes model phenomena where statistical properties remain unchanged over time, self-similar processes model phenomena where statistical characteristics are consistent across different scales (for details on self-similar processes, see the monograph \cite{embrechts2002selfsimilar}), and stationary increment processes are particularly suitable models, for example, when considering random movements.

Connections between the three classes, stationary processes, self-similar processes, and processes with stationary increments, are already rather well-understood in the literature. For the first results relating different classes, Lamperti proved in \cite{Lamperti-1962} that there is a one-to-one correspondence between stationary processes and self-similar processes, and the underlying bijection is nowadays known as the Lamperti transformation. Later, stationary increment processes were connected in \cite{Viitasaari-2016a}, where (continuous) stationary processes in continuous time were characterized as solutions to the Langevin equation 
\begin{equation}
\label{eq:introlang}
dX(t) = - \theta X(t) dt + dG(t),
\end{equation}
where $G$ is a suitable process with stationary increments, and the stochastic differential equation can be understood via integration in the Riemann-Stieltjes sense. Equation \eqref{eq:introlang} gives rise to generalized Ornstein-Uhlenbeck processes, links all three classes intimately together, and one can rather freely move from one class to another with suitable transformations. Such results have later been extended to discrete time \cite{voutilainen2017model} and to vector-valued processes both in continuous time \cite{voutilainen2021vector} and in discrete time \cite{voutilainen2020modeling}.

The features mentioned above, such as consistency of statistical properties over time or across different scales naturally appear also in the case of random fields, where the parameter $t$ of the process $\{X(t)\}_{t\in T}$ is multivariate. In this context, stationarity is still naturally defined, whereas the concepts of self-similarity and stationarity of increments are more complex. Increments of random fields are typically defined through rectangular increments, which capture changes related to all directions of the parameter space $T$ simultaneously. For literature related to stationarity of (rectangular) increments, see, e.g., \cite{makogin2015example,makogin2019gaussian} and the references therein. A version of the Lamperti theorem for random fields was established in \cite{genton2007self}, with self-similarity defined componentwise, allowing anisotropic scaling properties required in certain applications. For more details, see e.g. \cite{bierme2007operator}, where the authors adapt an alternative approach to anisotropy via the so-called operator scaling stable random fields.
For results connecting all three classes of fields via Langevin-type dynamics, we can only mention \cite{discrete} studying fields indexed by $\mathbb{N}^N$ and taking values in $\mathbb{R}$, and later extensions in \cite{voutilainen2025one} to cover fields indexed by $\mathbb{N}^N$ and taking values in $\mathbb{R}^n$. The main results of these articles provide natural dynamics, similar to the process case \cite{voutilainen2017model}, linking all three classes.

In this article, we extend the results of \cite{discrete} to fields $\{X(t)\}_{t\in \mathbb{R}^N}$. Compared to \cite{discrete}, the main difficulty is that differences are replaced by differentials, and the notion of Riemann-Stieltjes integrals is much more subtle in the multivariate case. In particular, for our purpose, we need to extend the notion of multiple Riemann-Stieltjes integrals over non-hyperrectangles that are non-classical domains. Some of our main contributions include several results on multiple Riemann-Stieltjes integrals, which are particularly well-suited to our framework. On top of that, we provide an approach to define multiple Riemann-Stieltjes integrals over certain non-classical domains in situations where one cannot simply apply zero-extensions of functions and use a more classical approach. With this at hand, we provide natural transformations between stationary fields, fields with stationary rectangular increments, and self-similar fields, cf. Theorem \ref{theorem:lamperti}, Theorem \ref{theorem:bijection}, and Corollary \ref{cor:bijection}. We introduce a generalization of the Langevin dynamics \eqref{eq:introlang} (Definition \ref{defi:langevin}) and, through the transformations, characterize stationary fields as solutions to these dynamics; cf. Theorem \ref{theorem:1} and Theorem \ref{theorem:2} (see also Theorem \ref{theorem:supreme}), providing a natural extension of the main result of \cite{Viitasaari-2016a} from random processes to random fields. We also note that when the generalization of \eqref{eq:introlang} is driven by anisotropic fractional Brownian sheets, the resulting fields can naturally be termed fractional Ornstein–Uhlenbeck fields (of the first kind).

Let us also briefly discuss the existing literature on multiple Riemann-Stieltjes integration. In the univariate case, it is well-known that the Riemann-Stieltjes integral $\int f(s)dg(s)$ exists provided that one of the functions $f$ and $g$ is continuous and the other is of bounded variation. Multiple Riemann-Stieltjes integrals (over standard hyperrectangles) are well-studied in the literature, with a particular focus on the bivariate case. For details, we refer to the monographs \cite{sard1963linear,hildebrandt1963introduction}. In the multivariate case, the assumption on bounded variation is commonly replaced by bounded variation in the sense of Hardy-Krause, originally introduced in \cite{hardy1906double}. Different variations in the bivariate case are studied in \cite{clarkson1933definitions}. We also cite more recent works \cite{owen2005multidimensional,prause2015sequential} considering the general multivariate setting.

The remainder of the article is organized as follows. We begin with necessary preliminaries on random fields and on multiple Riemann-Stieltjes integrals in Section \ref{sec:preliminaries}. In Section \ref{sec:RS} we provide our results related to multiple Riemann-Stieltjes integrals. In particular, we provide an approach to define integrals over hypertriangles that is suitable for our purposes. This allows us to study connections between the three classes of random fields and the generalized Langevin equation in Section \ref{sec:characterisation}. 

\section{Preliminaries}
\label{sec:preliminaries}
By $\Theta\in(0,\infty)^N$, we denote an $N$-dimensional parameter vector $\Theta = (\theta_1,\dots, \theta_N)$, whose elements belong to the interval $(0,\infty)$. The cardinality of a set $u\subseteq \{1,\dots,N\}$ is denoted by $|u|$. The complement of $u$ with respect to $\{1,\dots,N\}$ is denoted by $-u$ and the complement of $u$ with respect to some other set $v\subseteq \{1,\dots,N\}$ is denoted by $v-u$. For the sake of clarity, the elements of such sets are ordered in ascending order. Let $t= (t_1,\dots,t_N)\in\mathbb{R}^N$ and $u\subseteq \{1,\dots,N\}$. We use the notation $t_u$ to pick the elements of $t$ whose indices belong to $u$. If also $s\in\mathbb{R}^N$, then by $s_u:t_{-u}$ we denote an $N$-vector $y$, for which $y_i = s_i$ if $i\in u$ and $y_i = t_i$ if $i\notin u$. By applying this notation, we can also pick elements from more than two vectors. Moreover, the ordering of our sets allows us to write, for example, $t_{u_v}$, where $v \subseteq \{1,\dots,|u|\}.$

\begin{ex}
Let $N = 5$, $u = \{2,4,5\}$, $v = \{2,3\}$ and $w = \{3\}.$ Then
$$u-v = \{4,5\},\ t_u = (t_2,t_4,t_5),\ (t_u:s_w:r_{-u-w}) = (r_1,t_2,s_3,t_4,t_5),$$
and 
$$t_{u_v} = t_{\{4,5\}} = (t_4,t_5) = (t_2,t_4,t_5)_v = (t_u)_v.$$
Moreover, in the case of single-element sets, we apply the notation familiar from the context of vectors:
$$t_{u_w} = t_{u_{\{3\}}}= t_{u_3} = t_5.$$
\end{ex}

The notation $[s,t] \coloneqq \left\{x\in\mathbb{R}^N : s_m \leq x_m \leq t_m, m \in \{1,\dots,N\}\right\}$ stands for $N$-dimensional hyperrectangles. The $|u|$-dimensional subrectangles $[s_u,t_u]$ can be defined accordingly. The mixed partial derivative of an $N$-variate function $f$ with respect to the coordinates in $u$ is denoted by $f_{t_u}$ and $\frac{df}{dt_u}$. Hereby, $f_t$ and $\frac{df}{dt}$ represent the mixed partial derivative, where $f$ has been differentiated once with respect to every coordinate. Moreover, we define the bracket notation $[f(r)]_s^t$ as
\begin{equation*}
 [f]_s^t= \left[f(r)\right]_s^t \coloneqq \sum_{v\subseteq\{1,\dots,N\}} (-1)^{|v|} f(s_v:t_{-v}). 
\end{equation*}
The bracket $\left[f(r)\right]_s^t$ corresponds to the rectangular increment of $f$ given by the points $s$ and $t$. Similarly, 
\begin{equation*}
\left[f(r)\right]_{s_u}^{t_u}\coloneqq \sum_{v\subseteq u} (-1)^{|v|} f(r_{-u}:s_v:t_{u-v})
\end{equation*}
corresponds to the increment of $f$ on the subrectangle $[s_u,t_u]$. Note that $\left[f(r)\right]_{s_u}^{t_u}$ is a function of $N - |u|$ variables $r_{-u}$.
We will return to this in Definition \ref{defi:increments}. Lastly, we present an elementary lemma on sums of binomial coefficients that we are going to utilize on several occasions. A proof can be found for example in \cite{discrete}.

\begin{lemma}
\label{binomial}
Let $M\in\mathbb{N}^+$. Then
\begin{equation*}
\sum_{m=0}^M (-1)^m\binom{M}{m} = 0.
\end{equation*}
\end{lemma}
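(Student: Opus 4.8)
The plan is to deduce this directly from the binomial theorem. Recall that for any real $x$ and any $M\in\mathbb{N}^+$ one has the expansion
\begin{equation*}
(1+x)^M = \sum_{m=0}^M \binom{M}{m} x^m.
\end{equation*}
The target identity is precisely the value of the right-hand side at $x = -1$, so the whole argument reduces to evaluating the left-hand side at that point.

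Substituting $x = -1$ gives $(1+(-1))^M = 0^M$, and here the hypothesis $M \geq 1$ becomes essential: for a strictly positive integer exponent we have $0^M = 0$, whence the sum vanishes. I would emphasize this point, since for $M = 0$ the identity fails (the empty-looking sum then equals $\binom{0}{0} = 1$), and it is exactly the assumption $M \in \mathbb{N}^+$ that rules this degenerate case out. There is no genuine obstacle in the proof; the only thing to be careful about is not to overlook the role of the hypothesis in the evaluation $0^M = 0$.

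As an alternative that avoids appealing to the binomial theorem as a black box, one could argue by induction on $M$ together with Pascal's rule $\binom{M}{m} = \binom{M-1}{m-1} + \binom{M-1}{m}$, splitting the alternating sum into two telescoping pieces; or one could give a bijective proof by exhibiting a cardinality-preserving pairing between the subsets of $\{1,\dots,M\}$ of even size and those of odd size (for instance, toggling membership of a fixed element). Since the statement is entirely elementary and, as the authors note, already appears in \cite{discrete}, I would present only the one-line binomial-theorem argument and relegate the alternatives to a remark.
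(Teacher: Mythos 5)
Your proof is correct: evaluating the binomial expansion $(1+x)^M=\sum_{m=0}^M\binom{M}{m}x^m$ at $x=-1$ gives $0^M=0$, and you rightly note that the hypothesis $M\in\mathbb{N}^+$ is exactly what makes $0^M=0$ (the identity fails for $M=0$). The paper itself offers no proof of this lemma, deferring instead to the citation \cite{discrete}, so there is no in-paper argument to compare against; your one-line binomial-theorem evaluation is the standard proof and serves as a self-contained replacement for that citation.
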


\subsection{Stationarity, self-similarity, and stationarity of increments}
Next, we present the definitions of the three classes of random fields, together with the associated Lamperti theorem.
\begin{defi}
\label{defi:stationarity}
A random field $X = \{X(t)\}_{t\in\mathbb{R}^N}$ is stationary if 
$$ \{X(t+s)\}_{t\in\mathbb{R}^N} \overset{\text{law}}{=} \{X(t)\}_{t\in\mathbb{R}^N}$$
for every $s\in\mathbb{R}^N$ in the sense of finite-dimensional distributions.
\end{defi}

\begin{defi}
\label{defi:self-similarity}
Let $\Theta\in (0,\infty)^N$ and let 
$$Y = \{Y(e^t)\}_{t\in\mathbb{R}^N} = \{Y(e^{t_1}, \dots, e^{t_N})\}_{(t_1,\dots,t_N)\in\mathbb{R}^N}$$
be a random field. If
$$\{Y(e^{t+s})\}_{t\in\mathbb{R}^N} \overset{\text{law}}{=} \{e^{\Theta\cdot s}Y(e^{t})\}_{t\in\mathbb{R}^N}$$
for every $s\in\mathbb{R}^N$, then $Y$ is $\Theta$-self-similar.
\end{defi}

\begin{defi}
\label{defi:lamperti}
Let $\Theta\in (0,\infty)^N$, and let $X=\{X(t)\}_{t\in\mathbb{R}^N}$ and $Y=\{Y(e^t)\}_{t\in\mathbb{R}^N}$ be random fields. The Lamperti transformation $\mathcal{L}_\Theta$ and its inverse $\mathcal{L}^{-1}_\Theta$ are defined by
\begin{align*}
(\mathcal{L}_\Theta X)(e^t) &= e^{\Theta\cdot t} X(t), \quad t\in\mathbb{R}^N, \quad \text{and}\\
(\mathcal{L}^{-1}_\Theta Y)(t) &= e^{-\Theta\cdot t} Y(e^t), \quad  t\in\mathbb{R}^N.
\end{align*}
\end{defi}

\begin{rem}
By a simple change of variables, Definitions \ref{defi:self-similarity} and \ref{defi:lamperti} can be shown to be equivalent to the corresponding definitions in \cite{genton2007self}.  
\end{rem}
For a proof of the next theorem, we refer to \cite{genton2007self}.
\begin{theorem}
\label{theorem:lamperti}
Let $\Theta\in (0,\infty)^N$. If $X=\{X(t)\}_{t\in\mathbb{R}^N}$ is stationary, then $\mathcal{L}_\Theta X$ is $\Theta$-self-similar. Conversely, if $Y=\{Y(e^t)\}_{t\in\mathbb{R}^N}$ is $\Theta$-self-similar, then $\mathcal{L}^{-1}_\Theta Y$ is stationary.
\end{theorem}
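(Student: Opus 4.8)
The plan is to prove the two directions directly from the definitions by applying the Lamperti transformation and unwinding the finite-dimensional equalities in law. The key observation is that both $\mathcal{L}_\Theta$ and $\mathcal{L}^{-1}_\Theta$ act \emph{deterministically and pointwise} on each realization: at the level of finite-dimensional distributions, multiplying a random vector by a fixed collection of nonrandom scalars $e^{\Theta\cdot t}$ is a measurable bijection, so it preserves and transports equalities in law. Thus the entire argument reduces to a bookkeeping computation with the exponential factors, combined with the defining shift-invariance of stationarity (Definition \ref{defi:stationarity}) and the scaling relation of $\Theta$-self-similarity (Definition \ref{defi:self-similarity}).

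For the first direction, I would start with $X$ stationary and set $Y = \mathcal{L}_\Theta X$, so that $Y(e^t) = e^{\Theta\cdot t}X(t)$. The goal is to verify the self-similarity relation $\{Y(e^{t+s})\}_{t} \overset{\text{law}}{=} \{e^{\Theta\cdot s}Y(e^t)\}_t$ for an arbitrary fixed $s\in\mathbb{R}^N$. First I would expand the left-hand side using the definition: $Y(e^{t+s}) = e^{\Theta\cdot(t+s)}X(t+s) = e^{\Theta\cdot s}\,e^{\Theta\cdot t}\,X(t+s)$. By stationarity, the field $\{X(t+s)\}_t$ has the same finite-dimensional distributions as $\{X(t)\}_t$; since the deterministic factors $e^{\Theta\cdot s}e^{\Theta\cdot t}$ depend only on the fixed $s$ and the evaluation point $t$ (not on the randomness), I would apply the same deterministic linear map to both fields and conclude $\{e^{\Theta\cdot s}e^{\Theta\cdot t}X(t+s)\}_t \overset{\text{law}}{=} \{e^{\Theta\cdot s}e^{\Theta\cdot t}X(t)\}_t = \{e^{\Theta\cdot s}Y(e^t)\}_t$, which is exactly the claim.

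For the converse, I would take $Y$ to be $\Theta$-self-similar, set $X = \mathcal{L}_\Theta^{-1}Y$ with $X(t) = e^{-\Theta\cdot t}Y(e^t)$, and verify stationarity, i.e. $\{X(t+s)\}_t \overset{\text{law}}{=} \{X(t)\}_t$ for fixed $s$. Expanding, $X(t+s) = e^{-\Theta\cdot(t+s)}Y(e^{t+s})$, and the self-similarity hypothesis gives $\{Y(e^{t+s})\}_t \overset{\text{law}}{=} \{e^{\Theta\cdot s}Y(e^t)\}_t$. Multiplying through by the deterministic factor $e^{-\Theta\cdot(t+s)}$ (again a pointwise measurable bijection preserving equality in law) yields $\{e^{-\Theta\cdot(t+s)}Y(e^{t+s})\}_t \overset{\text{law}}{=} \{e^{-\Theta\cdot(t+s)}e^{\Theta\cdot s}Y(e^t)\}_t = \{e^{-\Theta\cdot t}Y(e^t)\}_t = \{X(t)\}_t$, completing the proof.

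I do not expect a genuine obstacle here, since the result is essentially a transcription of the one-parameter Lamperti theorem with the scalar exponent $\Theta\cdot t = \sum_i \theta_i t_i$ replacing the classical $Ht$. The only point requiring mild care is making the phrase ``apply a deterministic factor to both sides of an equality in law'' rigorous: one should note that equality in finite-dimensional distributions is preserved under any fixed (non-random) continuous --- here linear --- transformation applied coordinatewise at the finitely many evaluation points, because the pushforward of two equal finite-dimensional laws under the same measurable map remains equal. Once this is stated once, both directions follow by the symmetric exponent manipulations above, and the two computations are mirror images of each other via the identity $\mathcal{L}_\Theta^{-1}\mathcal{L}_\Theta = \mathrm{id}$.
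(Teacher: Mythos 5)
Your proof is correct, but it is worth pointing out that the paper itself does not prove Theorem \ref{theorem:lamperti} at all: it defers entirely to \cite{genton2007self}, having noted in the preceding remark that Definitions \ref{defi:self-similarity} and \ref{defi:lamperti} match the definitions of that reference up to a change of variables. Your argument is therefore a genuinely self-contained alternative: a direct verification of both implications from the paper's own definitions, whose only analytic ingredient is the observation that applying a fixed, non-random diagonal scaling at the finitely many evaluation points is a measurable bijection and hence preserves equality of finite-dimensional laws. What this buys is independence from the cited reference and from the change-of-variables translation between the two formulations of self-similarity; the cost is essentially nil, since the computation is short and the two directions are mirror images of each other, exactly as you note. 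The one point that genuinely needs to be said explicitly---and you do say it---is that the scaling factors $e^{\Theta\cdot s}e^{\Theta\cdot t}$ depend only on the fixed shift $s$ and the evaluation points $t$, never on the randomness, so that the pushforward argument applies at the level of each finite collection $t^{(1)},\dots,t^{(n)}$ of evaluation points.
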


\begin{defi}
\label{defi:increments}
Let $X = \{X(t)\}_{t\in\mathbb{R}^N}$ be a random field. Let $t,s\in\mathbb{R}^N$ be such that $s_l \leq t_l$ for every $1\leq l \leq N$. Then, the corresponding rectangular increment is
\begin{equation}
\label{increments}
\begin{split}
\left[X\right]_s^t &= \left[X(r)\right]_s^t = \sum_{v\subseteq\{1,\dots,N\}} (-1)^{|v|} X(s_v:t_{-v}) \\
 &= \sum_{(i_1,\dots,i_N)\in\{0,1\}^N} (-1)^{\sum_{l=1}^N i_l} X(t_1-i_1(t_1-s_1),\dots,t_N-i_N(t_N-s_N)).\\
 \end{split}
 \end{equation}
\end{defi}

%\begin{rem}
%\label{rem:notation}
%\myworries{We use the two parallel notations for increments, since we feel that they suit different situations. The delta-notation is adapted from \cite{makogin2019gaussian} and the bracket-notation is from \cite{prause2015sequential}.}
%\end{rem}

\begin{rem}
\label{rem:degeneracy}
Degenerate increments, i.e. increments with $t_l = s_l$ for some $l$, equal to zero. 
\end{rem}

\begin{rem}
\label{rem:extension}
Definition \ref{defi:increments} extends in a straightforward and intuitive way to cases where restriction $t \geq s$ does not hold. Let $m$ be the number of indices $l$ for which $t_l < s_l$ and let $\hat{t}$ and $\hat{s}$ denote the vectors, where elements $t_l$ and $s_l$ have been swapped so that $\hat{t} \geq \hat{s}$ is satisfied. Then
$[X]_s^t = (-1)^m [X]_{\hat{s}}^{\hat{t}}.$
\end{rem}

\begin{defi}
\label{defi:stationaryincrements}
Let $X = \{X(t)\}_{t\in\mathbb{R}^N}$ be a random field. Let $n\in\mathbb{N}$, and let 
$$h, t^{(1)}, \dots, t^{(n)}, s^{(1)}, \dots, s^{(n)} \in \mathbb{R}^N$$
be arbitrary. If 
\begin{equation*}
\begin{pmatrix}
[X]_{s^{(1)}+h}^{t^{(1)}+h}\\
\vdots\\
[X]_{s^{(n)}+h}^{t^{(n)}+h}
\end{pmatrix} \overset{\text{law}}{=} \begin{pmatrix}
[X]_{s^{(1)}}^{t^{(1)}}\\
\vdots\\
[X]_{s^{(n)}}^{t^{(n)}}
\end{pmatrix},
\end{equation*}
then $X$ is a stationary increment field.
\end{defi}

\begin{rem}
\label{rem:equivalent}
It can be shown that Definition \ref{defi:stationaryincrements} is equivalent to 
\begin{equation}
\label{alternative}
\{ [X]_h^{t+h}\}_{t\in\mathbb{R}^N} \overset{\text{law}}{=} \{[X]_0^t\}_{t\in\mathbb{R}^N} \quad\text{for all } h\in\mathbb{R}^N.
\end{equation}
Furthermore, \eqref{alternative} is the adapted version of the definition of stationarity of increments discussed, e.g., in \cite{makogin2019gaussian}.
\end{rem}
So far, we have not posed assumptions related to the regularity of the sample functions of our fields. As we next proceed to multiple Riemann-Stieltjes integration, we make a standing assumption that the realizations are almost surely continuous. This ensures the existence of appropriate integrals in the given sense.

\subsection{Multiple Riemann-Stieltjes integrals}

Assume that $N$-variate functions $f$ and $g$ are defined on hyperrectangle $[s,t]$, where $s,t\in\mathbb{R}^N$ with $s\leq t$ component-wise. Let $s_m = x_0^{(m)} < x_1^{(m)} < \dots < x_{n_m}^{(m)}= t_m$ be a partition of the one-dimensional interval $[s_m,t_m]$, where $m\in\{1,\dots,N\}$. The norm of such a partition $P_m$ is 
$$\Vert P_m \Vert = \max_{1 \leq i \leq n_m}\{|x^{(m)}_i - x^{(m)}_{i-1}|\}.$$  
The one-dimensional partitions define a division of the rectangle $[s,t]$ into subrectangles or cells
$$I_i = I_{i_1,\dots, i_N} \coloneqq [(x^{(1)}_{i_1-1}, \dots, x^{(N)}_{i_N-1}),(x^{(1)}_{i_1}, \dots, x^{(N)}_{i_N})], $$
where $i_m\in\{1,\dots,n_m\}$.
The norm of this rectangular partition $P$ is 
$$\Vert P \Vert = \max_{1\leq m \leq N} \{\Vert P_m \Vert\}.$$
Let $\xi_i = \xi_{i_1,\dots,i_N} = (\xi^{(1)}_{i_1,\dots,i_N}, \dots, \xi^{(N)}_{i_1,\dots,i_N})$ be any $N$-vectors satisfying $\xi_i \in I_i$ that we call tags. If
\begin{equation}
\label{rssum}
\lim_{\Vert P \Vert\to 0} \sum_{i_1 = 1}^{n_1}\dots\sum_{i_N = 1}^{n_N} g(\xi_i)[f]_{(x^{(1)}_{i_1-1}, \dots, x^{(N)}_{i_N-1})}^{(x^{(1)}_{i_1}, \dots, x^{(N)}_{i_N})}
\end{equation}
exists as a finite number, then we call it the (unrestricted) Riemann-Stieltjes integral of $g$ with respect to $f$. We note that if the 
tags are of the form $\xi_i = (\xi^{(1)}_{i_1}, \dots,\xi^{(N)}_{i_N}) $, then the integral is known as the restricted Riemann-Stieltjes integral in the literature (for details in the bivariate case, see e.g. \cite{sard1963linear}). Throughout, in our setting, we consider unrestricted Riemann-Stieltjes integrals. Moreover, we denote the integral \eqref{rssum} as
\begin{equation*}
\int_{s_1}^{t_1} \dots \int_{s_N}^{t_N} g(u)   d_{\{1,\dots,N\}}f(u) = \int_s^t g(u) df(u) = \int_s^t g df.
\end{equation*}
Note that due to the alternating sign in \eqref{increments},
$$\int_s^t df = [f]_s^t$$
for all functions $f$.
Assume that a partition $P$ is fixed and let $\Delta_j$ be the difference operator in coordinate $j$ defined as
\begin{equation*}
\begin{split}
\Delta_j f(x^{(1)}_{i_1}, \dots, x^{(N)}_{i_N})
&= f(x^{(1)}_{i_1}, \dots, x^{(j-1)}_{i_{j-1}}, x^{(j)}_{i_j}, x^{(j+1)}_{i_{j+1}},\dots, x^{(N)}_{i_N})\\
&\quad - f(x^{(1)}_{i_1}, \dots,x^{(j-1)}_{i_{j-1}}, x^{(j)}_{i_j-1}, x^{(j+1)}_{i_{j+1}},\dots, x^{(N)}_{i_N}).
\end{split}
\end{equation*}
Then
\begin{equation}
\label{difference}
[f]_{(x^{(1)}_{i_1-1}, \dots, x^{(N)}_{i_N-1})}^{(x^{(1)}_{i_1}, \dots, x^{(N)}_{i_N})} = \Delta_1\dots \Delta_N f(x^{(1)}_{i_1}, \dots, x^{(N)}_{i_N})\eqqcolon \Delta f(x_i).
\end{equation}
For $v \subseteq\{1,\dots,N\}$, we extend previously set notation to comprehend $d_vf$ and $du_v$. The former denotes the multivariate Riemann-Stieltjes differential over the coordinates in $v$ and the latter denotes the usual multivariate Riemann or Lebesgue differential over the coordinates in $v$. Naturally, these notations are applied in the relation of integration. For example,
\begin{equation*}
\int_{s_v}^{t_v} [g(r)d_v f(r)]_{s_{-v}}^{t_{-v}}
\end{equation*}
stands for evaluating the increment of $g(r) d_vf(r)$ on the subrectangle $[s_{-v}, t_{-v}]$. This results in $2^{|-v|}$ $|v|$-variate Riemann-Stieltjes integrals of $g$ with respect to $f$ over the coordinates $r_v$.

Next, we consider variations of multivariate functions. Let $P$ be a partition of the interval $[s,t]$ as described above. The variation of $f$ over $P$ is
$$ V_P(f) \coloneqq \sum_{i_1 = 1}^{n_1}\dots\sum_{i_N = 1}^{n_N}\left| [f]_{(x^{(1)}_{i_1-1}, \dots, x^{(N)}_{i_N-1})}^{(x^{(1)}_{i_1}, \dots, x^{(N)}_{i_N})} \right|.$$
We denote the set of all possible partitions of a fixed interval $[s,t]$ with $\mathcal{P}$.

\begin{defi}
\label{defi:vitali}
Let $f$ be defined on $[s,t]$. The variation of $f$ on $[s,t]$ in the sense of Vitali is
\begin{equation*}
V(f, [s,t]) \coloneqq  \sup_{P\in\mathcal{P}} V_P(f).
\end{equation*}
If $V(f, [s,t]) < \infty$, we denote $f\in BV[s,t]$.
\end{defi}

\begin{defi}
\label{defi:HKBV}
Let $f$ be defined on $[s,t]$. The variation of $f$ on $[s,t]$ in the sense of Hardy and Krause is
\begin{equation*}
V_{HK}(f, [s,t]) \coloneqq \sum_{\emptyset \neq v \subseteq \{1,\dots,N\}} V (f(r_v:t_{-v}), [s_v,t_v]).
\end{equation*}
If $V_{HK}(f, [s,t]) < \infty$, we denote $f\in BVHK[s,t]$.
\end{defi}

\begin{rem}
\label{rem:variations}
If the rectangular domain $[s,t]$ is clear from the context, we may simply write $f\in BV$ and $f\in BVHK$. The variation in the sense of Hardy and Krause is the sum of variations in the sense of Vitali over all subrectangles $[s_v,t_v]$, where the coordinates $-v$ have been fixed to $t_{-v}$. In the one-dimensional setting, the two variations are equal to the usual variation of univariate functions. 
Moreover, in Definition \ref{defi:HKBV}, replacing $t_{-v}$ with any fixed point from the subrectangle $[s_{-v}, t_{-v}]$, would not change the class $BVHK$ of functions. See \cite{owen2005multidimensional}, \cite{sard1963linear}, and \cite{youngfourier}.
\end{rem}

For details on the next lemma, see, e.g., \cite{owen2005multidimensional} and the  references therein.
\begin{lemma}
\label{lemma:bvkhproduct}
Let $f,g \in BVHK[s,t]$. Then $f+g$ and $fg$ are both in $BVHK[s,t]$.
\end{lemma}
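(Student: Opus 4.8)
The plan is to establish the two closure properties separately, treating the additive case as a warm-up and the multiplicative case as the substance of the lemma. For the sum $f+g$, I would argue directly from the definition of the rectangular increment. Since the bracket $[\,\cdot\,]_{s_v}^{t_v}$ is a signed sum of function evaluations (via the formula in Definition~\ref{defi:increments}), it is manifestly linear, so $[f+g]_{s_v}^{t_v} = [f]_{s_v}^{t_v} + [g]_{s_v}^{t_v}$ for every subrectangle. Hence for any partition $P$ and any $\emptyset\neq v\subseteq\{1,\dots,N\}$ the triangle inequality gives $V_P\big((f+g)(r_v:t_{-v})\big) \le V_P\big(f(r_v:t_{-v})\big) + V_P\big(g(r_v:t_{-v})\big)$, and taking suprema over $P$ followed by summing over all nonempty $v$ yields $V_{HK}(f+g) \le V_{HK}(f) + V_{HK}(g) < \infty$. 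This is routine and I would present it compactly.

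The multiplicative case is the main obstacle, and I would first reduce it to a statement about the Vitali variation on a single fixed subrectangle. By Definition~\ref{defi:HKBV}, it suffices to show that for each nonempty $v\subseteq\{1,\dots,N\}$ the restricted function $(fg)(r_v:t_{-v})$ has finite Vitali variation on $[s_v,t_v]$. The key algebraic tool is a Leibniz-type expansion for the rectangular increment of a product: on a cell with corners parametrized by $v$, the increment $[fg]_{s_v}^{t_v}$ does \emph{not} factor, but it can be written as a sum, over all ways of splitting the coordinate set $v$ into two disjoint pieces, of terms in which the increment of $f$ is taken over one piece of coordinates and the increment of $g$ over the complementary piece, with the remaining (un-incremented) coordinate being evaluated at a corner. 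Concretely one expands $f(\xi)g(\xi) - (\text{corner terms})$ by the discrete product rule applied one coordinate at a time; iterating the one-dimensional identity $\Delta_j(fg) = (\Delta_j f)\,g^{+} + f^{-}\,(\Delta_j g)$ (where $g^{+},f^{-}$ denote the appropriate shifted evaluations) across the $|v|$ coordinates produces such a sum of $2^{|v|}$ mixed terms.

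With this expansion in hand, I would bound $V_P\big((fg)(r_v:t_{-v})\big)$ on a fixed partition. Each mixed term factors as a product of an $f$-increment over a coordinate subset $w$, a $g$-increment over $v\setminus w$, times bounded evaluations; because $f$ and $g$ are continuous on the compact rectangle they are bounded, say by $M$, and a product of increments over complementary coordinate blocks, when summed over all cells, factors into a product of the corresponding partial Vitali variations (this is where the definition of $BVHK$ as variation over \emph{all} subrectangles does the work, since an increment over a proper subset of coordinates with the others fixed is exactly the kind of quantity controlled by the Hardy--Krause sum). Summing over cells and then over the finitely many splittings $w\subseteq v$ gives a bound of the form $\sum_{w\subseteq v} M\, V_P\big(f(\cdot)\big)\,V_P\big(g(\cdot)\big)$ on suitable subrectangles, each factor finite by hypothesis. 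Taking suprema over $P$ and summing over $v$ then shows $V_{HK}(fg)<\infty$.

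The step I expect to be genuinely delicate is making the ``increments factor into products of partial variations'' argument precise: a single mixed term is not itself an increment of a simpler function, so one must be careful that the supremum of a sum of products of increments is controlled by the product of the individual Vitali/Hardy--Krause variations rather than merely their partition-wise values. I would handle this by keeping the partition $P$ fixed throughout the factorization, bounding $V_P(fg)$ in terms of $V_P(f)$, $V_P(g)$, and the sup-norms, and only passing to the supremum over $P$ at the very end, using monotonicity to replace each $V_P$ by the full variation; the continuity assumption guarantees the sup-norms are finite, and Lemma~\ref{lemma:bvkhproduct}'s reliance on the full Hardy--Krause sum (rather than a single Vitali variation) is exactly what absorbs the lower-dimensional boundary terms that the product rule generates.
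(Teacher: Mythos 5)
First, a point of comparison: the paper does not actually prove this lemma; it is quoted from the literature with a pointer to \cite{owen2005multidimensional}. So your attempt should be measured against the standard proof, which indeed follows the route you sketch. Your argument for $f+g$ is complete and correct. For $fg$, the reduction to faces (it suffices to bound the Vitali variation of $(fg)(r_v:t_{-v})$ on $[s_v,t_v]$ for each $\emptyset\neq v$, since restrictions of $BVHK$ functions anchored at $t$ are again $BVHK$) and the iterated discrete Leibniz rule $\Delta_j(fg)=(\Delta_j f)g^{+}+f^{-}(\Delta_j g)$, producing $2^{|v|}$ mixed terms indexed by splittings $w\subseteq v$, are exactly the right tools. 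One minor slip: the lemma does not assume continuity, so you cannot invoke it to bound the sup-norms; instead, boundedness of a $BVHK$ function follows from the definition itself, by expanding a point evaluation against increments anchored at $t$ via inclusion--exclusion.

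The genuine gap is the factorization step, and your proposed remedy (keep $P$ fixed, bound $V_P(fg)$ by sup-norms and $V_P(f)V_P(g)$, pass to suprema at the end) does not repair it. Writing $\Delta_w$ for an increment over the coordinates in $w$, the mixed term associated with a splitting $\emptyset\neq w\subsetneq v$ has the form
\[
\sum_{i}\left|\Delta_w f\bigl(\,\cdot\,;\xi_{i_{v-w}}\bigr)\right|\cdot\left|\Delta_{v-w}\, g\bigl(\,\cdot\,;\eta_{i_{w}}\bigr)\right|,
\]
where the anchor $\xi_{i_{v-w}}$ of the $f$-increment changes with the cell index in the block $v-w$, and the anchor $\eta_{i_w}$ of the $g$-increment changes with the cell index in the block $w$. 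Since each factor depends on \emph{both} blocks of indices, the double sum over cells does not factor into a product of two partition variations, no matter how long you delay taking suprema. Moreover, after the natural estimate (maximize one factor over one block, sum the other), the quantities you must control are Vitali variations of restrictions $f(\,\cdot_w\,;y)$ with the remaining coordinates frozen at \emph{arbitrary} points $y$ of the rectangle --- whereas Definition~\ref{defi:HKBV} only controls restrictions frozen at $t_{-v}$. The missing ingredient is the quantitative form of Remark~\ref{rem:variations}: for every $y\in[s_{-w},t_{-w}]$,
\[
V\bigl(f(\,\cdot_w\,;y),[s_w,t_w]\bigr)\;\le\;\sum_{u\subseteq -w} V\bigl(f(\,\cdot_{w+u}\,;t_{-w-u}),[s_{w+u},t_{w+u}]\bigr)\;\le\; V_{HK}(f,[s,t]),
\]
proved by writing each frozen coordinate as $y_j=t_j-(t_j-y_j)$ and expanding by inclusion--exclusion, so that any increment anchored at $y$ becomes a signed sum of increments anchored at $t$ over boxes that embed into partitions of the corresponding faces. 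With this sub-lemma in place, your estimate closes and yields a bound on $V_{HK}(fg)$ by a finite combination of sup-norms and Hardy--Krause variations of $f$ and $g$, uniformly in $P$; without it, the central inequality of your third paragraph is unjustified as stated.
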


The next lemma shows that $N$ times continuously differentiable functions are bounded variations in the sense of Hardy and Krause. For a proof, we refer to \cite{owen2005multidimensional}.
\begin{lemma}
\label{lemma:variationofdifferentiable}
Let $f$ be a function such that $f_t$ exists and is continuous on $[s,t]$. Then
\begin{equation*}
V_{HK}(f, [s,t]) = \sum_{\emptyset\neq v\subseteq \{1,\dots,N\}} \int_{s_v}^{t_v} \left|f_{t_v}(r_v:t_{-v})\right| dr_v.
\end{equation*}
\end{lemma}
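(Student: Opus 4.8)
The plan is to reduce the claim, via Definition \ref{defi:HKBV}, to a single-rectangle statement and then prove that statement directly. For each nonempty $v \subseteq \{1,\dots,N\}$ I would show that the Vitali variation of the $|v|$-variate function $g(r_v) \coloneqq f(r_v:t_{-v})$ on $[s_v,t_v]$ equals $\int_{s_v}^{t_v} |f_{t_v}(r_v:t_{-v})|\,dr_v$, where $f_{t_v}$ is the full (i.e.\ $|v|$-fold) mixed derivative of $g$. Summing these identities over all nonempty $v$ reproduces exactly the right-hand side of the lemma. Thus everything comes down to the following one-box fact: if $g$ is a $k$-variate function on a hyperrectangle $[a,b]\subseteq\mathbb{R}^k$ whose full mixed derivative $g_t$ is continuous, then $V(g,[a,b]) = \int_a^b |g_t(r)|\,dr$.

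To prove this fact I would first establish a multivariate fundamental theorem of calculus for rectangular increments: for any cell $[c,d]\subseteq[a,b]$,
\[
[g]_c^d = \int_c^d g_t(r)\, dr .
\]
This follows by iterating the one-dimensional fundamental theorem of calculus one coordinate at a time; continuity of $g_t$ (together with Schwarz's theorem) guarantees that the iterated integral is well defined and independent of the order of integration, and each integration step peels off one of the $2^k$ corner evaluations comprising $[g]_c^d$ with the correct alternating sign, reproducing precisely the bracket of Definition \ref{defi:increments}.

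With the increment-equals-integral identity in hand, the two variational bounds are routine. For the upper bound, fixing a partition $P$ with cells $I_i$ and applying the identity on each cell together with the triangle inequality gives
\[
V_P(g) = \sum_i \left| \int_{I_i} g_t(r)\, dr \right| \leq \sum_i \int_{I_i} |g_t(r)|\, dr = \int_a^b |g_t(r)|\, dr ,
\]
so taking the supremum over $P$ yields $V(g,[a,b]) \leq \int_a^b |g_t(r)|\, dr$. For the matching lower bound I would invoke the integral mean value theorem: since $g_t$ is continuous, on each cell there is a tag $\xi_i \in I_i$ with $\int_{I_i} g_t(r)\, dr = g_t(\xi_i)\,|I_i|$, where $|I_i|$ is the volume of $I_i$. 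Hence $V_P(g) = \sum_i |g_t(\xi_i)|\,|I_i|$ is a Riemann sum for the continuous, and therefore Riemann integrable, function $|g_t|$; letting $\Vert P \Vert \to 0$ these sums converge to $\int_a^b |g_t(r)|\, dr$, giving $V(g,[a,b]) = \sup_P V_P(g) \geq \int_a^b |g_t(r)|\, dr$. Combining the bounds proves the one-box fact, and the reduction above finishes the proof.

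I expect the main obstacle to be the bookkeeping in the multivariate fundamental theorem of calculus, namely verifying that iterating the one-dimensional statement reproduces exactly the alternating-sign bracket $[g]_c^d$; once this identity is secured, the upper and lower bounds are immediate. A minor point to address is that the hypothesis ``$f_t$ exists and is continuous'' must be read as also supplying continuity of every lower-order mixed derivative $f_{t_v}$, which is precisely what the reduction to each subrectangle requires.
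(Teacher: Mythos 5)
Your proof is correct, but there is a wrinkle in the comparison: the paper does not prove this lemma at all --- it is quoted with a pointer to \cite{owen2005multidimensional}, so there is no internal proof to measure against. Your argument is essentially the standard one from that literature: reduce via Definition \ref{defi:HKBV} to a Vitali-variation identity on each face, establish the rectangular-increment fundamental theorem of calculus $[g]_c^d = \int_c^d g_t(r)\,dr$ by iterating the one-dimensional FTC, get the upper bound from the triangle inequality, and get the lower bound by the integral mean value theorem, which turns $V_P(g)$ into a Riemann sum for the continuous function $|g_t|$ converging to $\int_a^b |g_t(r)|\,dr$ as $\Vert P\Vert \to 0$. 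All steps check out: the cells of a partition are nondegenerate compact connected sets, so the mean value theorem applies, and the sup-versus-limit logic (every $V_P$ is bounded above by the integral, while a mesh-shrinking sequence of $V_P$'s converges to it) is sound. It is worth noting that your key step is the same device the paper itself uses in the proof of Theorem \ref{lemma:radon}, where the rectangular increment of $f$ over a cell is written as $f_t(\eta_i)\left|I_{i_1,\dots,i_N}\right|$ via the mean value theorem, so your proof is stylistically consistent with the paper. Your closing caveat is also apt: the hypothesis ``$f_t$ exists and is continuous'' must be read as supplying existence and continuity of the lower-order mixed derivatives $f_{t_v}$ as well, but this reading is forced in any case, since the right-hand side of the lemma is meaningless without it.
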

\begin{lemma}
\label{lemma:existence}
Let $g$ be a continuous function on $[s,t]$ and let $f$ be a bounded variation in the sense of Hardy and Krause on $[s,t]$. Then 
\begin{equation}
\label{subintegrals}
\int_{s_v}^{t_v}[g(r) d_vf(r)]_{s_{-v}}^{t_{-v}}
\end{equation}
exists for all $\emptyset \neq v \subseteq \{1,\dots,N\}$.
\end{lemma}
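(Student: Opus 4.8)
The plan is to reduce \eqref{subintegrals} to the single statement that a Riemann-Stieltjes integral of a continuous integrand with respect to a Vitali-bounded-variation integrator exists, and then to establish the latter by a Cauchy argument. First I would expand the outer bracket: by the definition of $[\,\cdot\,]_{s_{-v}}^{t_{-v}}$,
\begin{equation*}
\int_{s_v}^{t_v}[g(r) d_vf(r)]_{s_{-v}}^{t_{-v}} = \sum_{w\subseteq -v} (-1)^{|w|} \int_{s_v}^{t_v} g(r_v:s_w:t_{-v-w})\, d_v f(r_v:s_w:t_{-v-w}),
\end{equation*}
a finite, alternating sum of $2^{|-v|}$ genuine $|v|$-variate Riemann-Stieltjes integrals, one for each corner $c=s_w:t_{-v-w}$ of $[s_{-v},t_{-v}]$. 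Since a finite sum of convergent limits converges, it suffices to fix such a corner $c$ and prove that $\int_{s_v}^{t_v} g(r_v:c)\, d_v f(r_v:c)$ exists.

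Next I would verify the hypotheses of the abstract existence statement for the restrictions $G(r_v):=g(r_v:c)$ and $F(r_v):=f(r_v:c)$ on $[s_v,t_v]$. The integrand $G$ is continuous, being the restriction of the continuous $g$ to a slice. For the integrator I would invoke Remark \ref{rem:variations}: since $f\in BVHK[s,t]$ and the corner $c$ is a point of $[s_{-v},t_{-v}]$, fixing the complementary coordinates at $c$ rather than at $t_{-v}$ leaves the relevant variation finite, so $F\in BV[s_v,t_v]$, i.e. $V(F,[s_v,t_v])<\infty$.

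The core of the proof is then the abstract claim that, for $G$ continuous and $F\in BV$ on a hyperrectangle, the Riemann-Stieltjes sums \eqref{rssum} converge, which I would establish by the Cauchy criterion for the limit as $\Vert P\Vert\to 0$. The three ingredients are (i) finite additivity of the rectangular increment over refinements, namely $[F]_{I_i}=\sum_{J}[F]_{J}$, where $J$ runs over the subcells of a refinement $P'$ contained in a cell $I_i$ of $P$ — this follows from the product representation \eqref{difference}, $[F]=\Delta_1\cdots\Delta_{|v|}F$, since each one-dimensional difference $\Delta_j$ telescopes additively under the insertion of intermediate points and the operators act on distinct coordinates; (ii) uniform continuity of $G$ on the compact rectangle, with modulus $\omega(\delta)\to 0$; and (iii) the uniform bound $V_{P'}(F)\le V(F,[s_v,t_v])<\infty$. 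Writing $S(P)$ for a Riemann-Stieltjes sum and grouping a refinement $P'$ cell-by-cell over $P$, ingredient (i) gives $S(P')-S(P)=\sum_i\sum_{J\subseteq I_i}\bigl(G(\xi'_J)-G(\xi_i)\bigr)[F]_J$; since $\xi'_J$ and $\xi_i$ lie in the common cell $I_i$ of mesh at most $\Vert P\Vert$, (ii) and (iii) yield $|S(P')-S(P)|\le \omega(\Vert P\Vert)\,V(F,[s_v,t_v])$. Comparing two partitions $P_1,P_2$ through their common refinement $P'=P_1\vee P_2$ then gives $|S(P_1)-S(P_2)|\le\bigl(\omega(\Vert P_1\Vert)+\omega(\Vert P_2\Vert)\bigr)V(F,[s_v,t_v])\to 0$ as the meshes tend to $0$, so the sums are Cauchy and converge.

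The step I expect to be the main obstacle is the rigorous bookkeeping behind ingredient (i): making the additivity of the multivariate bracket over arbitrary refinements precise in the index notation, together with the matching control of the unrestricted tags $\xi'_J,\xi_i$ so that uniform continuity applies with $\Vert P\Vert$ as the correct length scale. A secondary point requiring care is confirming, via Remark \ref{rem:variations}, that anchoring the complementary coordinates at each of the $2^{|-v|}$ corners — not merely at $t_{-v}$ — preserves finiteness of the Vitali variation on the $v$-face.
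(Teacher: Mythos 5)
Your proof is correct, and its overall skeleton coincides with the paper's: you expand the bracket $[\,\cdot\,]_{s_{-v}}^{t_{-v}}$ into $2^{|-v|}$ corner integrals, and you reduce each of these to the existence of an RS integral of a continuous function against a Vitali-BV integrator, using the fact (Remark \ref{rem:variations}) that for $f\in BVHK[s,t]$ the slice $r_v\mapsto f(r_v:c)$ has finite Vitali variation on $[s_v,t_v]$ no matter at which point $c$ of $[s_{-v},t_{-v}]$ the remaining coordinates are anchored. The genuine difference lies in how the core existence statement is handled: the paper simply cites \cite{clarkson1933double} (see also \cite{sard1963linear}) for the theorem that continuity of the integrand plus Vitali bounded variation of the integrator guarantees existence of the unrestricted integral, remarking that the bivariate proof extends to arbitrary $N$ by ``routine book-keeping,'' whereas you prove this ingredient from scratch via the Cauchy criterion: additivity of the rectangular increment $[F]_{I_i}=\sum_{J\subseteq I_i}[F]_J$ under grid refinement, uniform continuity of $G$, and the uniform bound $V_{P'}(F)\le V(F,[s_v,t_v])$, yielding $|S(P_1)-S(P_2)|\le\bigl(\omega(\Vert P_1\Vert)+\omega(\Vert P_2\Vert)\bigr)V(F,[s_v,t_v])$ through the common refinement (with the standard final step that Cauchy-in-mesh sums converge to a limit attained by every tagged partition of small mesh). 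Your route is therefore more self-contained: it actually carries out, in arbitrary dimension and for unrestricted tags, precisely the bookkeeping the paper delegates to the literature, at the cost of length; the paper's proof is shorter but rests on the cited bivariate result and the unproved claim that its extension to general $N$ is routine. Both arguments rely identically on the BVHK-to-BV slicing step, and your explicit appeal to Remark \ref{rem:variations} for corners other than $t_{-v}$ is, if anything, slightly more careful than the paper's one-sentence justification.
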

\begin{proof}
It was shown in \cite{clarkson1933double} that the unrestricted integral $\int_s^t gdf$ exists for all continuous $g$ if and only if $f\in BV$ (See also \cite{sard1963linear}). While the result was proved only in the bivariate case, it is a routine book-keeping exercise (noted in \cite{clarkson1933double}) to obtain the claim in arbitrary dimensions. Hence our claim follows directly by the very definition of $f$ being bounded variation in the sense of Hardy and Krause. Indeed, if $f$ is bounded variation in the sense of Hardy and Krause, it implies that all lower dimensional projections (for which some variables are fixed) are of bounded variation as well.
\end{proof}
\begin{rem}
\label{rem:existence2}
For the existence of the integral \eqref{subintegrals} over the whole hyperrectangle, i.e. $v = \{1,\dots,N\}$, a sufficient condition is $f\in BV[s,t]$. A proof is presented in \cite{prause2015sequential}.
\end{rem}

The following integration by parts formula was originally presented in \cite{young1917multiple}.
\begin{lemma}
\label{lemma:ipp}
Let $f$ and $g$ be functions defined on $[s,t]$. If
\begin{equation*}
\int_{s_v}^{t_v}[g(r) d_vf(r)]_{s_{-v}}^{t_{-v}}
\end{equation*}
exists for all $\emptyset \neq v \subseteq \{1,\dots,N\}$, then also
\begin{equation*}
\int_{s_v}^{t_v}[f(r) d_vg(r)]_{s_{-v}}^{t_{-v}}
\end{equation*}
exists for all $\emptyset \neq v \subseteq \{1,\dots,N\}$. In addition, the integration by parts formula 
\begin{equation}
\label{iip}
\int_s^t f(r) dg(r) = [f(r)g(r)]_s^t + \sum_{\emptyset\neq v \subseteq \{1,\dots,N\}}(-1)^{|v|} \int_{s_v}^{t_v} [g(r)d_vf(r)]_{s_{-v}}^{t_{-v}}
\end{equation}
holds.
\end{lemma}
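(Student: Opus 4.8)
The plan is to argue by induction on the dimension $N$, taking as the engine the observation that the rectangular increment over a single cell factorizes through a discrete Leibniz rule. For $N=1$ the statement reduces to the classical one-dimensional Riemann--Stieltjes integration by parts: whenever $\int_{s_1}^{t_1} g\,d_1 f$ exists, so does $\int_{s_1}^{t_1} f\,d_1 g$, and $\int_{s_1}^{t_1} f\,d_1 g = [fg]_{s_1}^{t_1} - \int_{s_1}^{t_1} g\,d_1 f$, which is exactly \eqref{iip} with $N=1$. For the inductive step I would exploit that the coordinatewise difference operators $\Delta_1,\dots,\Delta_N$ commute and that the cell increment equals $\Delta_1\cdots\Delta_N$, so that applying the elementary one-dimensional product rule $\Delta_j(fg) = f(\text{upper})\,\Delta_j g + g(\text{lower})\,\Delta_j f$ in each coordinate turns $\Delta_1\cdots\Delta_N(fg)$ into a sum over subsets $v \subseteq \{1,\dots,N\}$: coordinates in $v$ carry an increment of $f$, coordinates in $-v$ carry an increment of $g$, each evaluated at the appropriate upper or lower corner of the cell. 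This is an exact algebraic identity for any fixed partition $P$.

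Summing this identity over all cells of $P$, the left-hand side telescopes to the global rectangular increment $[fg]_s^t$. On the right-hand side the extreme terms are immediately recognizable: the subset $v=\emptyset$ produces the Riemann--Stieltjes sum for $\int_s^t f\,dg$ evaluated with upper-corner tags, while $v=\{1,\dots,N\}$ produces the Riemann--Stieltjes sum for $\int_s^t g\,df$ with lower-corner tags, the latter convergent by hypothesis. Each intermediate term, after summation by parts in the coordinates of $-v$ to decouple the $f$- and $g$-increments, I would identify with a Riemann--Stieltjes sum for the lower-dimensional integral $\int_{s_v}^{t_v}[g\,d_v f]_{s_{-v}}^{t_{-v}}$; here the inductive hypothesis supplies the required lower-dimensional integration-by-parts identities, and Lemma \ref{lemma:existence} guarantees that the relevant limits exist. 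Passing to the limit $\Vert P\Vert \to 0$ then rearranges the telescoped identity into \eqref{iip}. The existence of the swapped integrals $\int_{s_v}^{t_v}[f\,d_v g]_{s_{-v}}^{t_{-v}}$ follows from the same scheme run with the roles of $f$ and $g$ interchanged, since the hypothesis and Lemma \ref{lemma:existence} are symmetric in which function is differentiated.

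The main obstacle will be the bookkeeping for the intermediate subsets $v$: in the raw Leibniz expansion the $f$-increment over $v$ is tied to the upper corner in the $-v$ directions while the $g$-increment over $-v$ is tied to the lower corner in the $v$ directions, so the two factors are coupled across the grid and the $g$-increments do not telescope on their own. Disentangling this coupling is precisely a multi-dimensional summation by parts, and it is the step where the dimension must be reduced and the inductive hypothesis invoked. A secondary technical point is that the identity as written controls only corner-tagged sums, so I would finally upgrade corner-tag convergence to genuine unrestricted integrability using the tag-independence of the unrestricted integral, i.e. the Cauchy-type criterion underlying the existence results cited for Lemma \ref{lemma:existence}; throughout, all manipulations remain exact at the level of finite sums, and the existence hypotheses enter only when interchanging the limit $\Vert P\Vert\to 0$ with the summations. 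Alternatively, one may bypass the explicit combinatorics and simply invoke Young's original argument in \cite{young1917multiple}.
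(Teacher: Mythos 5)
For context: the paper offers no proof of this lemma at all --- it is quoted from Young \cite{young1917multiple} --- so your final fallback (``simply invoke Young's original argument'') coincides with what the authors actually do. Judged as a self-contained argument, however, your plan has a genuine gap at the decisive step. The discrete Leibniz expansion of $\Delta_1\cdots\Delta_N(fg)$ and the telescoping of its sum over cells are exact, but by construction they evaluate $f$ and $g$ only at cell \emph{corners}, so everything you can extract in the limit is a statement about corner-tagged sums. The lemma asserts existence of \emph{unrestricted} integrals: convergence of $\sum_i f(\xi_i)\Delta g(x_i)$ to a common limit for \emph{arbitrary} tags $\xi_i\in I_i$. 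Convergence of corner-tagged sums is strictly weaker (already in one dimension, with $f=g=\mathbbm{1}_{[1/2,1]}$ on $[0,1]$, every upper-corner-tagged sum for $\int f\,dg$ equals $1$, while the unrestricted integral does not exist), and your proposed repair --- invoking ``tag-independence of the unrestricted integral'' --- is circular: tag-independence is a property the integral has \emph{once it exists}; it cannot be used to establish existence. The classical argument, and Young's, runs the rearrangement in the opposite direction: one starts from an \emph{arbitrarily} tagged sum for $\int f\,dg$, splits each cell at its own tag, and rewrites that sum exactly as boundary terms plus sums which the hypothesized integrals (being unrestricted, they tolerate these interlaced tags) absorb in the limit. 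That rearrangement is the heart of the proof; it is delicate in the unrestricted multivariate setting precisely because the tags of different cells need not align into a grid, and your sketch does not supply it --- both your ``summation by parts in the coordinates of $-v$'' for the intermediate subsets and the final upgrade are placeholders for exactly this missing argument.

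Two smaller points. First, you invoke Lemma \ref{lemma:existence} to guarantee that ``the relevant limits exist'', but that lemma assumes $g$ continuous and $f\in BVHK[s,t]$, neither of which is assumed in Lemma \ref{lemma:ipp}; the existence of $\int_{s_v}^{t_v}[g\,d_vf]_{s_{-v}}^{t_{-v}}$ for all nonempty $v$ is the \emph{hypothesis} of the lemma and needs no external justification, while the hypotheses of Lemma \ref{lemma:existence} are simply not available here. Second, your induction on $N$ must be organized so that only the hypothesized integrals, and those whose existence was already established at earlier stages, are ever used; as written it is not clear that the quantities appearing after your summation by parts are Riemann--Stieltjes sums for integrals of either kind, since the $f$- and $g$-increments in the intermediate terms are coupled across the grid (as you yourself note), and decoupling them generates additional boundary terms whose convergence also has to be accounted for.
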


%\begin{defi}
%\label{defi:operators}
%We denote 
%$$\quad d X(t)=d_{t_{1}} \ldots d_{t_N} X(t) \quad$$
%and 
%$$d^{*(i)} X(t)=d_{t_{1}} \ldots d_{t_{i-1}} d_{t_{i+1}} \ldots d_{t_N} X(t).$$
%Similarly, $d^{*(i_1, \ldots, i_j)}$ denotes the total differential operator $d$ from which the partial differentials $d_{t_{i_1}} d_{t_{i_2} \ldots} d_{t_{i_j}}$ have been omitted.
%\end{defi}
%At this point, the previous and the next definition are merely formal. We shed light on this matter a bit later.

\begin{defi}[Langevin equation]
\label{defi:langevin}
Let $\Theta \in(0, \infty)^{N}$ and let $G = \{G(t)\}_{t\in\mathbb{R}^N}$. The corresponding Langevin differential equation is
\begin{equation}
\label{langevinequation}
\sum_{u\subseteq\{1,\dots,N\}} \prod _{i\in u}\theta_i d_{-u}X(t) dt_u = dG(t).
\end{equation}

%\begin{equation}
%\label{langevinequation}
%\begin{aligned}
%& d X_t+\sum_{i_{1}=1}^{N} \theta_{i_{1}} d^{*\left(i_{1}\right)} X_{t} d t_{i_{1}}+\sum_{\substack{{i_{1}, i_{2}=1}\\ i_1>i_2}}^{N} \theta_{i_{1}} \theta_{i_{2}} d^{*\left(i_{1}, i_{2}\right)} X_{t} d t_{i_{1}} d t_{i_{2}} \\
%& +\sum_{\substack{i_{1}, i_{2}, i_{3}=1\\ i_1>i_1>i_3}}^{N} \theta_{i_{1}} \theta_{i_{2}} \theta_{i_3} d^{*\left(i_{1}, i_{2}, i_{3}\right)} X_td t_{i_{1}} d t_{i_{2}} d t_{i_{3}}+\ldots . \\
%& +\sum_{\substack{{i_1,\dots,i_{N-1} = 1}\\i_1>\dots>i_{N-1}}}^N \theta_{i_{1}}, \cdots \theta_{i_{N-1}} d^{*\left(i_{1}, \ldots, i_{N-1}\right)} \quad X_t d t_{i_1}\dots d t_{i_{N-1}} \\
%& +\Theta ! X_{t} d t_{1} \ldots d t_{N}=d G_{t},\quad\text{where } t\in\mathbb{R}^N.
%\end{aligned}
%\end{equation}
\end{defi}

\begin{ex}
\label{ex:2D}
In the case $N=2$, the Langevin differential equation is
$$d_{\{1,2\}} X(t)+\theta_{1} d_{2} X(t) d t_{1}+\theta_{2} d_{1} X(t) d t_{2}+\theta_1\theta_2 X(t) d t_{1} d t_{2}=d_{\{1,2\}} G(t).$$
%Cf. 2D-case in (*) .
\end{ex}

\begin{rem}
\label{rem:novel}
Definition \ref{defi:langevin} is a generalization of the Langevin equation to cover arbitrary $N$. For the case $N=2$, similar Langevin equations can be found at least from \cite{de2012least} and \cite{terdik2005notes}. In \cite{de2012least} the parameter is a scalar and the equation does not contain mixed differentials of type $d_{t_1}X(t)dt_2$, while in \cite{terdik2005notes} the authors consider 
the same equation as us except that the signs of the mixed terms are negative.
\end{rem}

\begin{rem}
\label{rem:interpretation}
We understand Equation \eqref{langevinequation} in the multivariate Riemann-Stieltjes sense. That is, the equation holds when both sides are integrated from 0 to $t$ with $t\in\mathbb{R}^N$, and the integrals with stochastic integrators are interpreted as multiple Riemann-Stieltjes integrals.
\end{rem}

\begin{ex}
\label{ex:3D}
In the 3-dimensional case, the term $d_{\{2,3\}} X(t) dt_1$ gives
$$\int_0^t d_{\{2,3\}} X(s) ds_1= \int_0^{t_1} \int_0^{t_2} \int_0^{t_3}  d_{\{2,3\}} X(s) ds_1,$$
where the inner integral is a double Riemann-Stieltjes integral over the last two coordinates and the outer integral is a usual Riemann or Lebesgue integral. Furthermore, we get
\begin{equation*}
\begin{split}
&\int_0^{t_1} \int_0^{t_2} \int_0^{t_3}  d_{\{2,3\}} X(s) ds_1 = \int_0^{t_1} [X(s)]_{0_{\{2,3\}}}^{t_{\{2,3\}}} ds_1 \\
&=\int_0^{t_1} X(s_1,t_2,t_3) - X(s_1,t_2,0)- X(s_1,0,t_3) + X(s_1,0,0) ds_1.
\end{split}
\end{equation*}
\end{ex}

\begin{defi}
\label{defi:GH}
Let $\Theta \in (0,\infty)^N$ and $G= \{G(t)\}_{t\in\mathbb{R}^N}$ be a stationary increment field. If
\begin{equation*}
\lim_{s\to\infty} \int_{-s}^{t_1}\dots \int_{-s}^{t_N} e^{\Theta\cdot u} dG(u)
\end{equation*}
converges in probability and defines an almost surely finite random variable for all $t\in\mathbb{R}^N$, then we denote $G\in\mathcal{G}_\Theta$. In this case, the limit is denoted by $\int_{-\infty}^te^{\Theta\cdot u} dG(u)$. 

Furthermore, if $G(t) = 0$ for all $t$ such that $\sum_{l=1}^N t_l = 0$, then we write $G\in \mathcal{G}_{\Theta,0}$. We define an equivalence relation in $G\in \mathcal{G}_{\Theta,0}$ as follows. Let $G, \tilde{G}\in\mathcal{G}_{\Theta,0}.$ If $\tilde{G}(t) = G(t) + F(t)$, where 
\begin{equation*}
F(t) = \sum_{\substack{\emptyset\neq v\subseteq \{1,\dots,N\}\\
|v| \neq N}} f^{(v)}(t_v)
\end{equation*}
and $F(t) = 0$ for all $t$ such that $\sum_{l=1}^N t_l = 0$, then $\tilde{G} \sim G$, and we write $\tilde{G} = G$.
\end{defi}

\begin{rem}
\label{rem:equivalence}
Notice that $dF(t) = 0$, since every term in the defining sum depends only on a subset $t_v$ of all variables $t$. This together with $F(t)=0$ for all $t$ such that $\sum_l t_l =0$ ensures that $\tilde{G}(t) = G(t) + F(t)\in\mathcal{G}_{\Theta,0}.$ 
\end{rem}

\begin{rem}
\label{rem:existence}
Since $G$ is continuous and $e^{\Theta\cdot u}$ is, by Lemma \ref{lemma:variationofdifferentiable}, of bounded variation in the sense of Hardy and Krause, by Lemmas \ref{lemma:existence} and \ref{lemma:ipp}, the integral
%$$\int_s^t G(u) de^{\Theta \cdot u}$$
$$\int_s^t e^{\Theta\cdot u} dG(u)$$
exists for all $t,s \in\mathbb{R}^N$.
\end{rem}

%\section{Results}
\section{On multiple Riemann-Stieltjes integrals}
\label{sec:RS}
In this section we present our results concerning multiple Riemann-Stieltjes integrals. We begin with the product rule and Radon-Nikodym type results in Section \ref{subsec:RS-standard}. The main difficulty in obtaining our characterizations, provided in Section \ref{sec:characterisation}, is to define Riemann-Stieltjes integrals over hypertriangles in a suitable way. This is the topic of Section \ref{subsec:RS-triangle}.
\subsection{On integration over  hyperrectangles}
\label{subsec:RS-standard}
\begin{theorem}
\label{lemma:radon}
Let $f$ be a function such that $f_t$ exists and is continuous on $[s,t]$. Then
$$d_u f(r) = f_{t_u}(r)dr_u\quad \text{for all } u\subseteq\{1,\dots,N\}$$
in the multiple Riemann-Stieltjes sense for every continuous integrand $g$. 
\begin{proof}
It suffices to consider the case $u = \{1,\dots,N\}$. The result for lower dimensional differentials follow by fixing variables $r_{-u}$ and considering $f$ as a $|u|$-variate function. Since the integrand $g$ is continuous, both integrals, Riemann-Stieltjes on the left and Riemann on the right, exist. Denote by
$$\left|I_{i_1,\dots,i_N}\right| = \prod_{j=1}^N (x_{i_j}^{(N)} -x_{i_j-1}^{(N)}) $$
the Lebesgue measure of $I_{i_1,\dots,i_N}$. We examine a summand of the Riemann-Stieltjes sum \eqref{rssum}. By the mean value theorem
\begin{equation*}
\begin{split}
 g(\xi_i)[f]_{(x^{(1)}_{i_1-1}, \dots, x^{(N)}_{i_N-1})}^{(x^{(1)}_{i_1}, \dots, x^{(N)}_{i_N})} =
 g(\xi_i) f_{t}(\eta_{i})\left|I_{i_1,\dots,i_N}\right|
 \end{split}
\end{equation*}
with some $\eta_{i}\in I_{i_1,\dots,i_N}$. 
Since $g$ is Riemann-Stieltjes integrable with respect to $f$, we may select $\xi_i = \eta_{i}$
yielding
\begin{equation*}
\begin{split}
&\lim_{\Vert P \Vert\to 0} \sum_{i_1 = 1}^{n_1}\dots\sum_{i_N = 1}^{n_N}g(\xi_i)[f]_{(x^{(1)}_{i_1-1}, \dots, x^{(N)}_{i_N-1})}^{(x^{(1)}_{i_1}, \dots, x^{(N)}_{i_N})}\\
&= \lim_{\Vert P \Vert\to 0}\sum_{i_1 = 1}^{n_1}\dots\sum_{i_N = 1}^{n_N} g(\eta_i) f_{t}(\eta_i)\left|I_{i_1,\dots,i_N}\right|,
\end{split}
\end{equation*}
where the right-hand side converges to the Riemann integral of $gf_t$.
\end{proof}
\end{theorem}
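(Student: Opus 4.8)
The plan is to reduce everything to the top-dimensional case $u=\{1,\dots,N\}$ and then to match, term by term, the Riemann-Stieltjes sums defining $\int_s^t g\,df$ with Riemann sums for $\int_s^t g\,f_t\,dr$. First I would argue that a general $u\subseteq\{1,\dots,N\}$ follows from the top-dimensional case: fixing the coordinates $r_{-u}$ at arbitrary values and regarding $f$ as a $|u|$-variate function of $r_u$, the hypothesis that $f_t$ exists and is continuous guarantees that the mixed partial $f_{t_u}$ exists and is continuous on the subrectangle $[s_u,t_u]$, so it suffices to treat $u=\{1,\dots,N\}$ and prove $\int_s^t g(r)\,df(r)=\int_s^t g(r)f_t(r)\,dr$ for every continuous $g$.

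Next I would record that both integrals exist. Since $f_t$ is continuous, Lemma \ref{lemma:variationofdifferentiable} gives $f\in BVHK[s,t]$, hence in particular $f\in BV[s,t]$, and by Lemma \ref{lemma:existence} together with Remark \ref{rem:existence2} the unrestricted Riemann-Stieltjes integral $\int_s^t g\,df$ exists for every continuous $g$; the ordinary integral $\int_s^t g\,f_t\,dr$ exists because $g f_t$ is continuous on the compact rectangle $[s,t]$. The a priori existence of the Riemann-Stieltjes integral is exactly what I will exploit below: it means the limit in \eqref{rssum} is independent of the choice of tags $\xi_i\in I_i$.

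The crux is a cell-by-cell identity. For a fixed partition $P$ with cells $I_i$, the multivariate fundamental theorem of calculus gives, for each cell with lower corner $a$ and upper corner $b$, that the rectangular increment equals the integral of the mixed partial over the cell, $[f]_a^b=\int_{a_1}^{b_1}\!\cdots\!\int_{a_N}^{b_N} f_t(r)\,dr$. Since $f_t$ is continuous, the mean value theorem for multiple integrals produces a point $\eta_i\in I_i$ with $[f]_a^b=f_t(\eta_i)\,|I_i|$, where $|I_i|$ denotes the Lebesgue measure of the cell. Substituting this into the Riemann-Stieltjes sum and using tag-independence to choose $\xi_i=\eta_i$, the sum \eqref{rssum} becomes $\sum_i g(\eta_i)f_t(\eta_i)\,|I_i|$, which is a Riemann sum for the continuous function $g f_t$ over $[s,t]$. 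As $\Vert P\Vert\to 0$ this converges to $\int_s^t g\,f_t\,dr$, and since the Riemann-Stieltjes limit is the same for every tagging, both integrals coincide.

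The step I expect to be the main obstacle is the legitimacy of selecting $\xi_i=\eta_i$: this is harmless only because the Riemann-Stieltjes integral is already known to exist, so its value does not depend on the tags; otherwise matching the Stieltjes sum to a Riemann sum would not, by itself, identify a common limit. A secondary point to handle with care is the reduction to $u=\{1,\dots,N\}$, where one should confirm that existence and continuity of the full mixed partial $f_t$ indeed yields existence and continuity of each lower-order partial $f_{t_u}$, uniformly in the fixed coordinates $r_{-u}$, so that the mean value theorem applies on every subrectangle.
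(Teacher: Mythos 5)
Your proposal is correct and follows essentially the same route as the paper's proof: reduction to $u=\{1,\dots,N\}$, a cell-by-cell mean value identity $[f]_a^b = f_t(\eta_i)\,|I_i|$, the choice of tags $\xi_i=\eta_i$ justified by the a priori existence (hence tag-independence) of the Riemann-Stieltjes integral, and convergence of the resulting Riemann sum to $\int_s^t g\,f_t\,dr$. The only differences are cosmetic: you spell out the existence step via Lemmas \ref{lemma:variationofdifferentiable} and \ref{lemma:existence} and derive the mean value identity through the fundamental theorem of calculus plus the integral mean value theorem, where the paper invokes the mean value theorem directly.
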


\begin{theorem}
\label{theorem:radon2}
Let $v\subseteq \{1,\dots,N\}$. Let $f$ be such that $f_{t_v}$ is continuous on $[s,t]$. Then
\begin{equation}
\label{proofthis}
d f(r) = d_{-v}f_{t_v}(r)dr_v
\end{equation}
in the multiple Riemann-Stieltjes sense for every integrand $g$ such that $g_t$ is continuous. 
\begin{proof}
The statement obviously holds for $v=\emptyset$. In addition, by Theorem \ref{lemma:radon}, it holds also for $v = \{1,\dots,N\}.$ Hence, we may exclude these cases in the proof. For other choices of $v$, the right-hand side of \eqref{proofthis} reads
\begin{equation}
\label{substitutehere}
\begin{split}
\int_s^t g(r) d_{-v} f_{t_v} (r) dr_v &= \int_{s_v}^{t_v} \int_{s_{-v}}^{t_{-v}} g(r) d_{-v} f_{t_v} (r) dr_v\\
&= \int_{s_v}^{t_v} \sum_{u\subseteq -v} (-1)^{|u|} \int_{s_u}^{t_u} \left[ f_{t_v}(r) d_u g(r)\right]_{s_{-v-u}}^{t_{-v-u}} dr_v\\
&= \int_{s_v}^{t_v} \sum_{u\subseteq -v} (-1)^{|u|} \int_{s_u}^{t_u} \left[ f_{t_v}(r) g_{t_u}(r) dr_u\right]_{s_{-v-u}}^{t_{-v-u}} dr_v \\
&= \sum_{u\subseteq -v} (-1)^{|u|} \int_{s_u}^{t_u}  \int_{s_v}^{t_v} \left[ f_{t_v}(r) g_{t_u}(r) dr_v\right]_{s_{-v-u}}^{t_{-v-u}} dr_u
\end{split}
\end{equation}
by integration by parts, Theorem \ref{lemma:radon} and Fubini's theorem. Next, we show via induction that
$$\int_{s_v}^{t_v} \left[ f_{t_v}(r) g_{t_u}(r) dr_v\right]_{s_{-v-u}}^{t_{-v-u}} = \sum_{w\subseteq v} (-1)^{|w|} \int_{s_w}^{t_w} \left[f(r) g_{t_{u+w}}(r)\right]_{s_{-u-w}}^{t_{-u-w}} dr_w. $$
For this, we assume, without loss of generality, that $v = \{1,\dots, |v|\}.$ In addition, we denote $v^{(n)} =\{1,\dots,|v|-n\}$ so that $v^{(0)} = v$. We pose the induction hypothesis
\begin{equation}
\label{inductionassumption}
\begin{split}
&\int_{s_v}^{t_v} \left[ f_{t_v}(r) g_{t_u}(r) dr_v\right]_{s_{-v-u}}^{t_{-v-u}}\\ 
&= \int_{s_{v^{(n)}}}^{t_{v^{(n)}}} \sum_{w\subseteq v- v^{(n)}} (-1)^{|w|} \int_{s_w}^{t_w}\left[f_{t_{v^{(n)}}}(r) g_{t_{u+w}}(r)\right]_{s_{-u-w-v^{(n)}}}^{t_{-u-w-v^{(n)}}}dr_w dr_{v^{(n)}},
\end{split}
\end{equation}
which a priori holds for $n=0$, giving also the base case. Next, we proceed to the induction step. We apply integration by parts coordinate by coordinate together with Fubini's theorem, starting from $r_{|v|-n}.$ The approach yields that \eqref{inductionassumption} is equal to
\begin{equation*}
\begin{split}
  &\int_{s_{v^{(n-1)}}}^{t_{v^{(n-1)}}} \sum_{w\subseteq v- v^{(n)}} (-1)^{|w|} \int_{s_w}^{t_w}\int_{s_{|v|-n}}^{t_{|v|-n}}\left[f_{t_{v^{(n)}}}(r) g_{t_{u+w}}(r)\right]_{s_{-u-w-v^{(n)}}}^{t_{-u-w-v^{(n)}}}dr_{|v|-n}dr_w dr_{v^{(n-1)}},
  \end{split}
\end{equation*}
from which integration by parts gives
\begin{equation*}
\begin{split}
&\sum_{w\subseteq v- v^{(n)}} (-1)^{|w|} \int_{s_w}^{t_w}\int_{s_{|v|-n}}^{t_{|v|-n}}\left[f_{t_{v^{(n)}}}(r) g_{t_{u+w}}(r)\right]_{s_{-u-w-v^{(n)}}}^{t_{-u-w-v^{(n)}}}dr_{|v|-n}dr_w\\
  & =\sum_{w\subseteq v- v^{(n)}} (-1)^{|w|} \int_{s_w}^{t_w}\left[f_{t_{{v^{(n-1)}}}}(r) g_{t_{u+w}}(r)\right]_{s_{-u-w-{v^{(n-1)}}}}^{t_{-u-w-{v^{(n-1)}}}}dr_w \\
  &\quad - \sum_{w\subseteq v- v^{(n)}} (-1)^{|w|} \int_{s_{w+\{|v|-n\}}}^{t_{w+\{|v|-n\}}}\left[f_{t_{{v^{(n-1)}}}}(r) g_{t_{u+w+\{|v|-n\}}}(r)\right]_{s_{-u-w-v^{(n)}}}^{t_{-u-w-v^{(n)}}}dr_{w+\{|v|-n\}} . 
  \end{split}
\end{equation*}
For the first term, we note that the summation is over the set $q\subseteq v - \{1,\dots,|v|-n-1\} = v-v^{(n-1)}$ with $|v| -n \notin q$. Moreover, by the change of variable $q=w+\{|v|-n\}$, implying $-q - \{1,\dots,|v|-n-1\}= -w-\{1,\dots,|v|-n\}$, the latter term can be written as
$$\sum_{\substack{q\subseteq v- v^{(n-1)}\\ |v|-n \in q}} (-1)^{|q|} \int_{s_q}^{t_q}\left[f_{t_{v^{(n-1)}}}(r) g_{t_{u+q}}(r)\right]_{s_{-u-q-v^{(n-1)}}}^{t_{-u-q-v^{(n-1)}}}dr_q .$$
Hence, \eqref{inductionassumption} is equal to
$$\int_{s_{v^{(n-1)}}}^{t_{v^{(n-1)}}} \sum_{\substack{q\subseteq v- v^{(n-1)}}} (-1)^{|q|} \int_{s_q}^{t_q}\left[f_{t_{v^{(n-1)}}}(r) g_{t_{u+q}}(r)\right]_{s_{-u-q-v^{(n-1)}}}^{t_{-u-q-v^{(n-1)}}}dr_q dr_{v^{(n-1)}} $$
completing the induction step. Furthermore, choosing $n = |v|$ in \eqref{inductionassumption} gives
$$\int_{s_v}^{t_v} \left[ f_{t_v}(r) g_{t_u}(r) dr_v\right]_{s_{-v-u}}^{t_{-v-u}}= \sum_{w\subseteq v} (-1)^{|w|} \int_{s_w}^{t_w} \left[f(r) g_{t_{u+w}}(r)\right]_{s_{-u-w}}^{t_{-u-w}} dr_w.$$
Finally, substitution to \eqref{substitutehere} gives
\begin{equation*}
\begin{split}
&\int_s^t g(r) d_{-v} f_{t_v}(r) dr_v\\ &= \sum_{u\subseteq -v} (-1)^{|u|} \int_{s_u}^{t_u} \sum_{w\subseteq v} (-1)^{|w|} \int_{s_w}^{t_w} \left[f(r) g_{t_{u+w}}(r)\right]_{s_{-u-w}}^{t_{-u-w}} dr_w dr_u\\
&= \sum_{u\subseteq -v}\sum_{w\subseteq v}(-1)^{|u+w|} \int_{s_{u+w}}^{t_{u+w}}\left[f(r) g_{t_{u+w}}(r)\right]_{s_{-u-w}}^{t_{-u-w}} dr_{u+w}\\
&= \sum_{q\subseteq \{1,\dots,N\}}(-1)^{|q|} \int_{s_{q}}^{t_{q}}\left[f(r) g_{t_{q}}(r)\right]_{s_{-q}}^{t_{-q}} dr_{q} = \int_s^t g(r) df(r),
\end{split}
\end{equation*}
where the last equality follows directly by integration by parts. This completes the proof.
\end{proof}
\end{theorem}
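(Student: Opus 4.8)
The plan is to prove the stated identity by relating the mixed differential on the right-hand side to the full Riemann--Stieltjes differential $df$ through repeated integration by parts, using Theorem \ref{lemma:radon} to trade derivatives between $f$ and $g$ whenever a differential has been pushed onto the smooth integrand. Concretely, spelling out both sides, I want to show
$$\int_s^t g(r)\, d_{-v} f_{t_v}(r)\, dr_v = \int_s^t g(r)\, df(r)$$
for every $g$ with $g_t$ continuous. One cannot simply invoke Theorem \ref{lemma:radon} with $u = v$ ``inside'' $df$, because the rectangular increment of $f$ over a full $N$-cell does not factor as a product of a $v$-increment and a $(-v)$-increment, so the differential $d_{\{1,\dots,N\}}f$ does not split. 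Hence I first dispose of the two extreme cases --- $v = \emptyset$ is a tautology and $v = \{1,\dots,N\}$ is exactly Theorem \ref{lemma:radon} --- and then treat $\emptyset \neq v \subsetneq \{1,\dots,N\}$ by the integration-by-parts route.

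For the left-hand side I would integrate by parts (Lemma \ref{lemma:ipp}) \emph{only} in the $-v$ coordinates, treating the inner integral $\int_{s_{-v}}^{t_{-v}} g\, d_{-v} f_{t_v}$ as Riemann--Stieltjes with integrator $f_{t_v}$ and integrand $g$; this moves the stochastic differential onto $g$ and yields a sum over $u \subseteq -v$ of terms carrying $d_u g$ and the sign $(-1)^{|u|}$, the boundary term appearing as the $u = \emptyset$ summand. Since $g_t$, and hence each $g_{t_u}$, is continuous, Theorem \ref{lemma:radon} converts every $d_u g$ into $g_{t_u}\, dr_u$, so all integrals become ordinary Riemann integrals and Fubini reorders them freely. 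The whole problem then reduces to the single combinatorial identity
$$\int_{s_v}^{t_v} \left[ f_{t_v}(r) g_{t_u}(r)\, dr_v \right]_{s_{-v-u}}^{t_{-v-u}} = \sum_{w\subseteq v} (-1)^{|w|} \int_{s_w}^{t_w} \left[ f(r) g_{t_{u+w}}(r)\right]_{s_{-u-w}}^{t_{-u-w}} dr_w,$$
which I would establish by induction on $|v|$. Each step integrates by parts in a single coordinate $j \in v$ and applies Fubini: the boundary term feeds $j$ into the increment bracket, its $f$-derivative being absorbed by the fundamental theorem of calculus, while the interior term moves the derivative onto $g$ with a sign change and keeps $r_j$ integrated. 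This dichotomy over the coordinates of $v$ is precisely what generates the sum over subsets $w$ together with the factor $(-1)^{|w|}$.

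Finally I would substitute this identity back and merge the two nested sums. As $u \subseteq -v$ and $w \subseteq v$ are disjoint, setting $q = u + w$ collapses the double sum and the nested integrals into
$$\sum_{q\subseteq\{1,\dots,N\}} (-1)^{|q|} \int_{s_q}^{t_q} \left[ f(r) g_{t_q}(r)\right]_{s_{-q}}^{t_{-q}} dr_q,$$
and one last application of Lemma \ref{lemma:ipp} together with Theorem \ref{lemma:radon}, now to $\int_s^t g\, df$ directly, recognizes this expression as $\int_s^t g\, df$, closing the argument.

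The step I expect to be the main obstacle is the inductive combinatorial identity: correctly tracking the index sets and their complements appearing in the bracket subscripts (the passage from $s_{-v-u}^{\,t_{-v-u}}$ to $s_{-u-w}^{\,t_{-u-w}}$), propagating the alternating signs faithfully through each single-coordinate integration by parts, and handling the final change of variables $q = u + w$. Keeping the continuity hypotheses explicit is what licenses every use of Theorem \ref{lemma:radon}, Lemma \ref{lemma:ipp}, and Fubini, but once the index calculus is organized correctly those justifications are routine.
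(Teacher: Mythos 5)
Your proposal follows essentially the same route as the paper's own proof: dispose of the cases $v=\emptyset$ and $v=\{1,\dots,N\}$, integrate by parts in the $-v$ coordinates and apply Theorem \ref{lemma:radon} with Fubini, reduce everything to the identity $\int_{s_v}^{t_v} \left[ f_{t_v}(r) g_{t_u}(r)\, dr_v \right]_{s_{-v-u}}^{t_{-v-u}} = \sum_{w\subseteq v} (-1)^{|w|} \int_{s_w}^{t_w} \left[ f(r) g_{t_{u+w}}(r)\right]_{s_{-u-w}}^{t_{-u-w}} dr_w$ proved by coordinate-by-coordinate induction, and then merge the sums via $q=u+w$ to recover $\int_s^t g\,df$ by a final integration by parts. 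This matches the paper's argument step for step, including the combinatorial bookkeeping you flag as the main obstacle.
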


Next, we prove a product rule for multiple Riemann-Stieltjes differentials.
\begin{theorem}
\label{theorem:productrule}
Let $f$ be a function such that $f_t$ exists and is continuous on $[s,t]$ and let $g$ be continuous.
Then 
\begin{equation}
\label{productrule}
d(f(r)g(r)) = \sum_{u\subseteq \{1,\dots,N\}} f_{t_u}(r) d_{-u}g(r) dr_u
\end{equation}
in the multiple Riemann-Stieltjes sense for every integrand $h$ such that $h_t$ exists and is continuous on $[s,t]$.
\begin{proof}
By integration by parts and Theorem \ref{lemma:radon}, the left-hand side yields
\begin{equation*}
\begin{split}
&\int_s^t h(r) d(f(r)g(r)) = [h(r)f(r)g(r)]_s^t + \sum_{\emptyset\neq v \subseteq \{1,\dots,N\}}(-1)^{|v|} \int_{s_v}^{t_v} [g(r)f(r)d_vh(r)]_{s_{-v}}^{t_{-v}}\\
&= [h(r)f(r)g(r)]_s^t +\sum_{\emptyset\neq v\subseteq \{1,\dots,N\}}(-1)^{|v|} \int_{s_v}^{t_v} [g(r)f(r)h_{t_v}(r)dr_v]_{s_{-v}}^{t_{-v}}.
 \end{split}
\end{equation*}
We apply Theorem \ref{lemma:radon} and Lemma \ref{lemma:ipp} repeatedly in the rest of the proof.
The right-hand side of \eqref{productrule} gives
\begin{equation*}
\begin{split}
\int_s^t h(r)f(r) dg(r) +  \sum_{\emptyset\neq u\subseteq \{1,\dots,N\}}\int_s^t  h(r)f_{t_u}(r) d_{-u}g(r) dr_u,
\end{split}
\end{equation*}
where
\begin{equation*}
\int_s^t h(r)f(r) dg(r) = [h(r)f(r)g(r)]_s^t + \sum_{\emptyset\neq v \subseteq \{1,\dots,N\}}(-1)^{|v|} \int_{s_v}^{t_v} [g(r)d_v(h(r)f(r))]_{s_{-v}}^{t_{-v}}
\end{equation*}
and
\begin{equation*}
\begin{split}
&\sum_{\emptyset\neq u\subseteq \{1,\dots,N\}}\int_s^t  h(r)f_{t_u}(r) d_{-u}g(r) dr_u = \sum_{\emptyset\neq u\subseteq \{1,\dots,N\}}\int_{s_u}^{t_u} \int_{s_{-u}}^{t_{-u}} h(r)f_{t_u}(r) d_{-u}g(r) dr_u \\
&=\int_{s}^{t} h(r)f_t(r)g(r) dr + \sum_{\substack{u\subseteq\{1,\dots,N\}\\
1 \leq |u|\leq N-1}} \int_{s_u}^{t_u} [h(r)f_{t_u}(r)g(r)]_{s_{-u}}^{t_{-u}} dr_u\\
&\quad+ \sum_{\substack{u\subseteq\{1,\dots,N\}\\
1 \leq |u|\leq N-1}}\int_{s_u}^{t_u} \sum_{\substack{v\subseteq -u\\ 1\leq |v|}} (-1)^{|v|} \int_{s_v}^{t_v} [g(r) d_v(h(r)f_{t_u}(r))]_{s_{-u-v}}^{t_{-u-v}} dr_u.
\end{split}
\end{equation*}
Note that the integrals in the last term are well-defined by Lemma \ref{lemma:variationofdifferentiable}, since the partial derivative of $hf_{t_u}$ with respect to $t_v$ is continuous. By combining the previous two equations, the right-hand side of \eqref{productrule} gives
\begin{equation*}
\begin{split}
&[h(r)f(r)g(r)]_s^t +\sum_{\emptyset\neq v \subseteq \{1,\dots,N\}}(-1)^{|v|} \int_{s_v}^{t_v} [g(r)\frac{dhf}{dt_v}(r) dr_v]_{s_{-v}}^{t_{-v}}\\
&\quad+\int_{s}^{t} h(r)f_t(r)g(r) dr + \sum_{\substack{u\subseteq\{1,\dots,N\}\\
1 \leq |u|\leq N-1}} \int_{s_u}^{t_u} [h(r)f_{t_u}(r)g(r)dr_u]_{s_{-u}}^{t_{-u}}\\
&\quad+ \sum_{\substack{u\subseteq\{1,\dots,N\}\\
1 \leq |u|\leq N-1}} \sum_{\substack{v\subseteq -u\\ 1\leq |v|}} (-1)^{|v|} \int_{s_{u+v}}^{t_{u+v}} \left[g(r) \frac{dhf_{t_u}}{dt_v}(r) dr_{u+v}\right]_{s_{-u-v}}^{t_{-u-v}}.
\end{split}
\end{equation*}
That is, in order to show that Equation \eqref{productrule} holds in the given sense, we need to show that
\begin{equation*}
\begin{split}
&\sum_{\emptyset\neq v \subseteq \{1,\dots,N\}}(-1)^{|v|} \int_{s_v}^{t_v} [g(r)\sum_{\substack{u\subseteq v\\ u \neq v}}h_{t_u}(r)f_{t_{v-u}}(r) dr_v]_{s_{-v}}^{t_{-v}}\\
&\quad +\int_{s}^{t} h(r)f_t(r)g(r) dr + \sum_{\substack{u\subseteq\{1,\dots,N\}\\
1 \leq |u|\leq N-1}} \int_{s_u}^{t_u} [h(r)f_{t_u}(r)g(r)dr_u]_{s_{-u}}^{t_{-u}}\\
&\quad+ \sum_{\substack{u\subseteq\{1,\dots,N\}\\
1 \leq |u|\leq N-1}} \sum_{\substack{v\subseteq -u\\ 1\leq |v|}} (-1)^{|v|} \int_{s_{u+v}}^{t_{u+v}} \left[g(r) \sum_{w\subseteq v} h_{t_w}(r) f_{t_{u+(v-w)}}(r) dr_{u+v}\right]_{s_{-u-v}}^{t_{-u-v}}\\
&= \sum_{\emptyset\neq v \subseteq \{1,\dots,N\}}(1+(-1)^{|v|}) \int_{s_v}^{t_v} [g(r)h(r)f_v(r) dr_v]_{s_{-v}}^{t_{-v}}\\
&\quad+ \sum_{\substack{u\subseteq\{1,\dots,N\}\\
1 \leq |u|\leq N-1}} \sum_{\substack{v\subseteq -u\\ 1\leq |v|}} (-1)^{|v|} \int_{s_{u+v}}^{t_{u+v}} \left[g(r) h(r) f_{t_{u+v}}(r) dr_{u+v}\right]_{s_{-u-v}}^{t_{-u-v}}\\
&\quad+ \sum_{\substack{v\subseteq \{1,\dots,N\} \\ 2 \leq |v|}}(-1)^{|v|} \int_{s_v}^{t_v} [g(r)\sum_{\substack{u\subseteq v\\ 1\leq |u| \leq |v|-1}}h_{t_u}(r)f_{t_{v-u}}(r) dr_v]_{s_{-v}}^{t_{-v}}\\
&\quad+ \sum_{\substack{u\subseteq\{1,\dots,N\}\\
1 \leq |u|\leq N-1}} \sum_{\substack{v\subseteq -u\\ 1\leq |v|}} (-1)^{|v|} \int_{s_{u+v}}^{t_{u+v}} \left[g(r) \sum_{\substack{w\subseteq v\\1\leq |w|}} h_{t_w}(r) f_{t_{u+(v-w)}}(r) dr_{u+v}\right]_{s_{-u-v}}^{t_{-u-v}} =0.
\end{split}
\end{equation*}
The first term after the equality follows by choosing $u = \emptyset$ in the first term of the equation and combining it with the following two terms. The third term after the equality is the first term of the equation with the additional restriction $u \neq \emptyset$. 
We show that the sum of the last two terms equals zero. The first two terms can be handled in a similar, but less involved manner. Thereby, it suffices to show
\begin{equation}
\label{zero}
\begin{split}
&\sum_{\substack{q\subseteq \{1,\dots,N\} \\ 2 \leq |q|}}(-1)^{|q|} \int_{s_q}^{t_q} [g(r)\sum_{\substack{w\subseteq q\\ 1\leq |w| \leq |q|-1}}h_{t_w}(r)f_{t_{q-w}}(r) dr_q]_{s_{-q}}^{t_{-q}}\\
&\quad+ \sum_{\substack{u\subseteq\{1,\dots,N\}\\
1 \leq |u|\leq N-1}} \sum_{\substack{v\subseteq -u\\ 1\leq |v|}} (-1)^{|v|} \int_{s_{u+v}}^{t_{u+v}} \left[g(r) \sum_{\substack{\tilde{w}\subseteq v\\1\leq |\tilde{w}|}} h_{t_{\tilde{w}}}(r) f_{t_{u+(v-\tilde{w})}}(r) dr_{u+v}\right]_{s_{-u-v}}^{t_{-u-v}} = 0.
\end{split}
\end{equation}
Now, let $q \subseteq \{1,\dots,N\}$ with $|q|\geq 2$ be fixed. Also, let $w\subseteq q$ with $1\leq |w|\leq |q|-1$ be fixed. By doing this, we have fixed a unique summand of the first term of \eqref{zero}. The sign of this term is positive if $|q|$ is even and negative if $|q|$ is odd. We next consider which terms in the latter double sum of \eqref{zero} correspond to the choices $q$ and $w$. First of all, it has to hold that $w\subseteq v$, but $v$ can also contain indices of $q-w$. In addition, $u+v=q$ has to hold. That is, by fixing $v$ we also fix $u$. We now have four different scenarios, where $|q|$ and $|w|$ can be even or odd. We start by assuming that $|q|$ and $|w|$ are both even. By Lemma \ref{binomial}, the sum related to the corresponding terms in \eqref{zero} equals
\begin{equation}
\label{zero1}
\binom{|q|-|w|}{0} - \binom{|q|-|w|}{1}+\dots -\binom{|q|-|w|}{|q|-|w|-1} + 1 = 0,
\end{equation}
where the binomial coefficient $\binom{|q|-|w|}{i}$ corresponds to $v$ which contains $w$ and $i$ elements from $q-w$. Here we have used the fact that since $u+v = q$ and $|u|,|v| \geq 1$, $i$ runs from 0 to $|q| - |w| -1$. By considering in a similar way the other cases, we see that regardless of whether $|q|$ and $|w|$ are even or odd, the sum equals zero by Lemma \ref{binomial}. 
\end{proof}

\end{theorem}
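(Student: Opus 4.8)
The plan is to prove the differential identity \eqref{productrule} in its intended sense: to show that, when integrated over $[s,t]$ against an arbitrary admissible integrand $h$ (one with $h_t$ continuous), the two sides agree. On the left I would apply the integration by parts formula of Lemma \ref{lemma:ipp} with integrand $h$ and integrator $fg$. Its hypothesis requires the reciprocal integrals $\int_{s_v}^{t_v}[fg\,d_v h]_{s_{-v}}^{t_{-v}}$ to exist, and this holds for free: since $h_t$ is continuous, Theorem \ref{lemma:radon} turns $d_v h$ into $h_{t_v}\,dr_v$, so these become ordinary Riemann integrals of continuous functions (note that $g$ being merely continuous, rather than of bounded variation, causes no difficulty here). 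Integration by parts then both guarantees that $\int_s^t h\,d(fg)$ exists and expands it as $[hfg]_s^t$ plus a sum of Riemann integrals of $fg\,h_{t_v}$.

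I would treat the right-hand side term by term. The summand $u=\emptyset$ is $\int_s^t hf\,dg$, handled by a single application of Lemma \ref{lemma:ipp}. For each nonempty $u$, the summand $\int_s^t h f_{t_u}\,d_{-u}g\,dr_u$ is an outer Riemann integral in the coordinates $r_u$ whose inner part is a Riemann-Stieltjes integral of $hf_{t_u}$ against $d_{-u}g$; applying Lemma \ref{lemma:ipp} to that inner integral (again its hypothesis reduces, via Theorem \ref{lemma:radon}, to ordinary Riemann integrals because $hf_{t_u}$ has continuous partials) and then Theorem \ref{lemma:radon} to the resulting smooth differentials rewrites each summand as boundary contributions plus Riemann integrals of $g$ against products of partial derivatives of $h$ and of $f_{t_u}$. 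Expanding these products with the ordinary Leibniz rule $(hf_{t_u})_{t_v}=\sum_{w\subseteq v} h_{t_w} f_{t_{u+(v-w)}}$ puts both sides in a common form: boundary terms together with iterated Riemann integrals of $g$ times $h_{t_w} f_{t_{q-w}}$ over subrectangles.

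At this point the $[hfg]_s^t$ boundary terms cancel and proving the identity reduces to showing that the remaining Riemann-integral contributions sum to zero, a purely combinatorial cancellation. I would fix a set $q$ with $|q|\ge 2$ and a nonempty proper subset $w$ of $q$ and collect every occurrence of the integral of $g\,h_{t_w}f_{t_{q-w}}$ over $[s_q,t_q]$. Writing $M=|q|-|w|$, the contributions from the double sum are indexed by the sets $v$ with $w\subseteq v\subseteq q$ and $v\neq q$, carrying sign $(-1)^{|v|}=(-1)^{|w|+i}$ where $i=|v|-|w|$ counts the chosen elements of $q-w$, so they sum to $(-1)^{|w|}\sum_{i=0}^{M-1}(-1)^i\binom{M}{i}$; the single sum over $q$ supplies exactly the missing $i=M$ term $(-1)^{|w|+M}\binom{M}{M}$, completing the full alternating sum $(-1)^{|w|}\sum_{i=0}^{M}(-1)^i\binom{M}{i}$, which vanishes by Lemma \ref{binomial}. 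The extreme cases $w=\emptyset$ and $w=v$ are disposed of by the same argument in a simpler form.

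The main obstacle is exactly this final combinatorial bookkeeping: keeping the index sets $u$, $v$, $w$, and $q$ mutually consistent through two nested integrations by parts and the Leibniz expansion, and verifying that for each fixed pair $(q,w)$ the signs collected from the single and double sums assemble into one complete alternating binomial sum so that Lemma \ref{binomial} applies. Everything preceding it, namely the two integrations by parts and the substitutions of Theorem \ref{lemma:radon}, is routine, the continuity of $f_t$ and $h_t$ ensuring that every integral appearing is well defined.
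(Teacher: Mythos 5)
Your proposal is correct and follows essentially the same route as the paper's proof: integration by parts (Lemma \ref{lemma:ipp}) combined with Theorem \ref{lemma:radon} on both sides, the Leibniz expansion of $(hf_{t_u})_{t_v}$, cancellation of the boundary terms $[hfg]_s^t$, and finally the alternating binomial cancellation via Lemma \ref{binomial}, obtained exactly as in the paper by fixing a pair $(q,w)$, letting the double sum contribute the terms indexed by $w\subseteq v\subseteq q$, $v\neq q$, and letting the single sum supply the missing top term. The only cosmetic difference is that you factor out $(-1)^{|w|}$ to assemble the full alternating sum in one stroke, whereas the paper checks the four parity cases of $|q|$ and $|w|$ separately.
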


\begin{lemma}
\label{lemma:fundamental2}
    Let $f$ be continuous and $F_v(t) = \int_{s_v}^{t_v} [f(r)] _{s_{-v}}^{t_{-v}}dr_v$. Then 
    $$dF_v(t) = d\int_{s_v}^{t_v} f(r_v:t_{-v}) dr_v = d_{-v}(f(t))dt_v$$
    for every integrand $g$ such that $g_t$ is continuous.
    \begin{proof}
    Notice first that
    $$\int_{s_v}^{t_v} [f(r)] _{s_{-v}}^{t_{-v}}dr_v =\int_{s_v}^{t_v} \sum_{w\subseteq -v} (-1)^{|w|} f(r_v:s_w:t_{-v-w}) dr_v $$ 
    depends on all $N$ coordinates only for $w=\emptyset$. In all other cases, the square increments are identically zero (see also Theorem 2.2.5 in \cite{prause2015sequential}) and thus, we obtain
    $$d\int_{s_v}^{t_v} \sum_{\substack{{w\subseteq -v}\\ w\neq\emptyset}} (-1)^{|w|} f(r_v:s_w:t_{-v-w}) dr_v=0.$$
    We have established that $$dF_v(t) = d\int_{s_v}^{t_v} f(r_v:t_{-v}) dr_v$$
    and Theorem \ref{theorem:radon2} completes the proof.

 \end{proof}
\end{lemma}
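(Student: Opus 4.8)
The plan is to split $F_v$ according to the expansion of the bracket increment and to show that all but one of the resulting summands contribute nothing to the differential. Writing out
\begin{equation*}
F_v(t) = \int_{s_v}^{t_v}[f(r)]_{s_{-v}}^{t_{-v}}\,dr_v = \sum_{w\subseteq -v}(-1)^{|w|}\int_{s_v}^{t_v} f(r_v:s_w:t_{-v-w})\,dr_v,
\end{equation*}
I would isolate the term $w=\emptyset$, namely $\Phi(t):=\int_{s_v}^{t_v} f(r_v:t_{-v})\,dr_v$, and handle the remaining terms separately. The goal of the first half of the argument is precisely the first claimed equality, $dF_v(t)=d\Phi(t)$.

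First I would argue that every summand with $w\neq\emptyset$ has vanishing Riemann-Stieltjes differential. Fix such a $w$ and set $\phi_w(t):=\int_{s_v}^{t_v} f(r_v:s_w:t_{-v-w})\,dr_v$. Because the coordinates indexed by $w$ are frozen at $s_w$, the function $\phi_w$ does not depend on $t_j$ for any $j\in w$. Consequently its rectangular increment over any cell vanishes: fixing one $j\in w$ and pairing in the bracket $[\phi_w]$ each subset $u$ with $j\notin u$ against $u\cup\{j\}$, the two corresponding evaluations differ only in the frozen coordinate $j$ and hence agree, so the pair cancels and the whole increment is zero. Thus every summand of the Riemann-Stieltjes sum defining $\int_s^t g\,d\phi_w$ is zero for each partition, giving $d\phi_w=0$ against any admissible $g$ (cf.\ Theorem 2.2.5 in \cite{prause2015sequential}). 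Summing over $w\neq\emptyset$ yields $dF_v(t)=d\Phi(t)$.

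It then remains to identify $d\Phi$. Since $f$ is continuous, iterated application of the fundamental theorem of calculus in each coordinate of $v$ shows that the mixed partial derivative $\Phi_{t_v}$ exists, is continuous, and equals $\Phi_{t_v}(t)=f(t)$; here the coordinates $t_{-v}$ appearing inside $f$ play only the role of parameters during the $r_v$-integration, so differentiation in the $v$-variables is clean. As $\Phi_{t_v}=f$ is continuous, Theorem \ref{theorem:radon2} applies to $\Phi$ and gives
\begin{equation*}
d\Phi(t)=d_{-v}\Phi_{t_v}(t)\,dt_v = d_{-v}(f(t))\,dt_v,
\end{equation*}
which, combined with $dF_v=d\Phi$, is exactly the statement.

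The main obstacle is the cancellation step: one must verify that a function failing to depend on a single coordinate forces the \emph{full} $N$-dimensional rectangular increment — and hence the entire Riemann-Stieltjes differential tested against any $g$ with $g_t$ continuous — to vanish, rather than merely some lower-dimensional increment. Once this bookkeeping isolating the $w=\emptyset$ term is in place, the identity $\Phi_{t_v}=f$ and the invocation of Theorem \ref{theorem:radon2} are routine.
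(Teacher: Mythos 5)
Your proof is correct and follows essentially the same route as the paper's: the same expansion of the bracket over $w\subseteq -v$, the same observation that the $w\neq\emptyset$ terms have identically vanishing rectangular increments (hence zero Riemann--Stieltjes differential), and the same final appeal to Theorem \ref{theorem:radon2} after identifying $\Phi_{t_v}=f$. You merely spell out two steps the paper delegates to a citation and to the reader --- the pairing/cancellation argument for the frozen coordinates and the iterated fundamental-theorem computation --- which is fine.
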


\subsection{Integration over non-hyperrectangles}
\label{subsec:RS-triangle}
Next, we prepare to discuss integrals over non-standard domains. For $\sum_{l=1}^N t_l \geq 0$ we set 
\begin{equation}
\label{T}
T = T(t) = \{x\in\mathbb{R}^N : x_l \leq t_l\text{ for all } l, \sum x \geq 0\},
\end{equation}
where $\sum x$ is the sum of all elements of the vector $x$.
It can be shown that
\begin{equation*}
\begin{split}
T = \left\{x\in\mathbb{R}^N : -\sum_{l\neq 1}t_l \leq x_1 \leq t_1, -x_1-\sum_{l \notin \{1,2\}} t_l \leq x_2 \leq t_2,\right.\\
\left.\dots, -\sum_{l\neq N}x_l \leq x_N \leq t_N \right\}.
\end{split}
\end{equation*}
Similarly, if $v \subseteq \{1,\dots,N\}$, then
$$T_v = T_v(t)= \{ x\in\mathbb{R}^{|v|} : x_l \leq t_{v_l} \text{ for all } l, \sum x+ \sum t_{-v} \geq 0\},$$
where $T_{\{1,\dots,N\}} = T.$
It can be shown that
\begin{equation}
\label{T_v}
\begin{split}
T_v = \left\{-\sum_{l\neq v_1} t_l \leq x_1 \leq t_{v_1}, -x_1-\sum_{l \notin \{v_1,v_2\}} t_l \leq x_2 \leq t_{v_2},\right. \\
\left. \dots, -\sum_{l\neq |v|}x_l  - \sum t_{-v}\leq x_{|v|}\leq t_{v_{|v|}} \right\}.
\end{split}
\end{equation}
Moreover, we set
$$\tilde{t} = \left( -\sum_{l\neq 1} t_l, \dots, -\sum_{l \neq N}t_l\right)$$
and hence
$$\tilde{t}_v = \left( -\sum_{l\neq v_1} t_l, \dots, -\sum_{l \neq v_{|v|}}t_l\right).$$
Now $ R \coloneqq [\tilde{t},t]$ is a hypercube containing $T$ and $R_v \coloneqq [\tilde{t}_v,t_v]$ is a $|v|$-dimensional hypercube containing $T_v$. Notice that $\tilde{t}_l = t_l$ for some $l$ if and only if $\sum_l t_l = 0$. Moreover, then $\tilde{t}_j = -\sum_l t_l = t_j$ for all $j$. That is, if $\sum_l t_l = 0$, the rectangle $R$ (and $R_v$) is a single point. 

\begin{ex}
\label{ex:T}
If $N=2$, then $R$ is a square and the subcubes $R_v$ correspond to the sides of $R$, where the coordinate $-v$ is fixed to $t_{-v}$ or $\tilde{t}_{-v}$. Moreover, the line $t_1+t_2 =0$ divides $R$ into two triangles along the corner points $(t_1, \tilde{t}_2)$ and $(\tilde{t}_1, t_2)$. The upper right triangle is $T$, and $T_v$ correspond to the catheti of $T$.  If $N=3$, then $R$ is a cube and the subcubes $R_v$ correspond to the faces and edges of $R$. Moreover, the plane $t_1+t_2+t_3 = 0$ cuts $R$ into two pieces along the corner points $(t_1,t_2,\tilde{t}_3), (t_1, \tilde{t}_2,t_3)$ and $(\tilde{t}_1, t_2,t_3)$ that are neighbours of $t$. The resulting tetrahedron is $T$, and $T_v$ correspond to the faces and edges of $T$ that are not subsets of the cutting plane.
\end{ex}
Our aim is to define integrals of the type
\begin{equation}
\label{integral}
 \int_T f dg,
\end{equation}
where $f$ is such that $f_t$ is continuous on $R$ and $g$ is continuous on $R$. We require the natural condition that our integrals satisfy 
$$\int_R f dg = \int_T f dg + \int_{R\setminus T} f dg,$$
where now
\begin{equation}
\label{wholeintegral}
\begin{split}
&\int_R f dg = [fg]_{\tilde{t}}^t + \sum_{\emptyset \neq v\{1,\dots,N\}} (-1)^{|v|} \int_{R_v} [f_{t_v}(u)g(u) du_v]_{\tilde{t}_{-v}}^{t_{-v}}
= [fg]_{\tilde{t}}^t\\
&\quad + \sum_{\emptyset \neq v \{1,\dots,N\}} (-1)^{|v|} \sum_{w\subseteq -v} (-1)^{|w|}\int_{R_v} f_{t_v}(t_{-v-w}:\tilde{t}_w:u_v)g(t_{-v-w}:\tilde{t}_w:u_v) du_v
\end{split}
\end{equation}
by integration by parts and Theorem \ref{lemma:radon}. For $v\neq \emptyset, w\subseteq -v$, the integral $\int_{R_v}$ is associated with the set
\begin{equation}
\label{set2}
\{x\in\mathbb{R}^N: x_v \in R_v, x_w = \tilde{t}_w, x_{-v-w} = t_{-v-w}\} \eqqcolon \mathcal{R}_{v,w},
\end{equation}
where the integration is over the coordinates $x_v$, while the other coordinates are appropriately fixed. 

\begin{lemma}
\label{lemma:intersection}
For sets \eqref{T} and \eqref{set2}, we have that
\begin{enumerate}[label=(\roman*)]
\item When $w = \emptyset$, then
$$T \cap \mathcal{R}_{v, \emptyset} = \{x\in \mathbb{R}^N : x_v\in T_v, x_{-v} = t_{-v}\}, $$
which is associated with the integral $\int_{T_v}$.
\item When $w \neq \emptyset$, then
$$\mu_{|v|}(T \cap \mathcal{R}_{v, w}) = 0,$$
where $\mu_{|v|}$ is the $|v|$-dimensional Lebesgue measure.
\end{enumerate}
\begin{proof}
(i) 
\begin{equation*}
\begin{split}
T \cap \mathcal{R}_{v, \emptyset} &= \{x\in\mathbb{R}^N: x_v \in [\tilde{t}_v, t_v], x_{-v} = t_{-v}\} \cap \{x\in\mathbb{R}^N: x \leq t, \sum x \geq 0\}\\
&= \{x\in\mathbb{R}^N : x_{-v} = t_{-v}, x_v\in [\tilde{t}_v, t_v], \sum x \geq 0\}\\
&= \{x\in\mathbb{R}^N : x_{-v} = t_{-v}, x_v\in [\tilde{t}_v, t_v], \sum t_{-v} + \sum x_v \geq 0\}\\
&= \{x\in\mathbb{R}^N : x_{-v} = t_{-v}, x_v\leq t_v, \sum t_{-v} + \sum x_v \geq 0\}\\
&= \{x\in \mathbb{R}^N : x_v\in T_v, x_{-v} = t_{-v}\},
\end{split}   
\end{equation*}
where the penultimate equation follows, since, for example
$$x_{v_1} \geq - \sum t_{-v} - \sum_{l\neq 1} x_{v_l} \geq - \sum t_{-v} - \sum_{l\neq 1} t_{v_l} = \tilde{t}_{v_1}.$$
(ii) Now
$$T \cap \mathcal{R}_{v, w} =  \{x\in\mathbb{R}^N : x_w = \tilde{t}_w, x_{-v-w} = t_{-v-w}, x_v\in [\tilde{t}_v, t_v], \sum x \geq 0\}.$$
Assume that $|w| = 1$. Then
$$ \sum x = \tilde{t}_{w_1} + \sum_{l\neq w_1} x_l \leq -\sum_{l \neq w_1} t_l + \sum_{l\neq w_1} t_l = 0.$$
That is, $T \cap \mathcal{R}_{v, w}$ is a single point set
$$\{ x\in\mathbb{R}^N : x_w = \tilde{t}_w : x_{-w} = t_{-w}\}.$$
In a similar way, if $|w| \geq 2$, then $T \cap \mathcal{R}_{v, w} = \emptyset$ unless we are in the degenerate case $\sum t = 0$ obtaining a set of one point. 
\end{proof}
\end{lemma}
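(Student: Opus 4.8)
The plan is to handle the two parts separately, reducing each to a direct manipulation of the defining inequalities of $T$ together with the coordinate constraints built into $\mathcal{R}_{v,w}$.

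For part (i) I would begin by writing out $T \cap \mathcal{R}_{v,\emptyset}$ explicitly. Since $w=\emptyset$ forces $x_{-v}=t_{-v}$ and $x_v\in[\tilde{t}_v,t_v]$, the condition $x\leq t$ collapses to $x_v\leq t_v$, while $\sum_l x_l\geq 0$ becomes $\sum t_{-v}+\sum x_v\geq 0$. Comparing with the definition of $T_v$, the only remaining point is to check that the lower bounds $x_{v_k}\geq \tilde{t}_{v_k}$ carried by $R_v$ are redundant. I would verify this coordinate by coordinate: from $\sum t_{-v}+\sum x_v\geq 0$ together with $x_{v_l}\leq t_{v_l}$ for $l\neq k$ one obtains $x_{v_k}\geq -\sum t_{-v}-\sum_{l\neq k}t_{v_l}=-\sum_{l\neq v_k}t_l=\tilde{t}_{v_k}$, which is precisely the desired bound. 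Hence the two sets coincide, which is exactly the asserted identification with $\int_{T_v}$.

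For part (ii) the key mechanism is the interaction of $\sum_l x_l\geq 0$ with the fixed coordinates $x_w=\tilde{t}_w$. I would first treat $|w|=1$: writing $w=\{w_1\}$ and using $x_l\leq t_l$ for $l\neq w_1$, the estimate $\sum_l x_l\leq \tilde{t}_{w_1}+\sum_{l\neq w_1}t_l=0$ combined with $\sum_l x_l\geq 0$ forces $\sum_l x_l=0$, whence $x_l=t_l$ for every $l\neq w_1$, so the intersection is a single point of $|v|$-dimensional measure zero. For $|w|\geq 2$ I would run the analogous estimate: summing the equalities $x_{w_j}=\tilde{t}_{w_j}$ and bounding the remaining coordinates by $t$ yields the telescoping bound $\sum_l x_l\leq (1-|w|)\sum_l t_l$. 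Since $T$ is defined only for $\sum_l t_l\geq 0$, this upper bound is strictly negative whenever $\sum_l t_l>0$, so the intersection is empty; when $\sum_l t_l=0$ one has $\tilde{t}=t$ and $R$ degenerates to the point $\{t\}$, giving again a set of measure zero. In every case $\mu_{|v|}(T\cap\mathcal{R}_{v,w})=0$.

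The main obstacle I anticipate is the index bookkeeping in part (ii) for $|w|\geq 2$: one must track correctly that $\sum_{j}\tilde{t}_{w_j}+\sum_{l\notin w}t_l=(1-|w|)\sum_l t_l$, and one must explicitly invoke the standing hypothesis $\sum_l t_l\geq 0$ to conclude emptiness rather than merely a small set. Separating out the degenerate case $\sum_l t_l=0$, where $R$ collapses to a single point, is what keeps the measure-zero conclusion uniform across all $w\neq\emptyset$.
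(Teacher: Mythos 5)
Your proposal is correct and follows essentially the same route as the paper: in (i) you collapse the constraints and show the lower bounds $x_{v_k}\geq\tilde{t}_{v_k}$ are redundant via exactly the same coordinate-wise estimate, and in (ii) you use the same upper bound $\sum_l x_l\leq 0$ for $|w|=1$ forcing a single point, while for $|w|\geq 2$ your explicit telescoping bound $\sum_l x_l\leq(1-|w|)\sum_l t_l$ just spells out what the paper leaves as ``in a similar way.''
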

Based on Lemma \ref{lemma:intersection}, we allocate terms
$$\sum_{\emptyset\neq v \subseteq \{1,\dots,N\}} (-1)^{|v|}\int_{T_v} f_{t_v}(t_{-v}:u_v) g(t_{-v}:u_v)du_v$$
of \eqref{wholeintegral} to $\int_T f dg$ and terms 
\begin{equation*}
\begin{split}
& \sum_{\emptyset\neq v \subseteq \{1,\dots,N\}} (-1)^{|v|}\left(\int_{R_v\setminus T_v}  f_{t_v}(t_{-v}:u_v) g(t_{-v}:u_v)du_v \right.\\
&\quad+ \left. \sum_{\emptyset \neq w \subseteq -v} (-1)^{|w|} \int_{R_v}  f_{t_v}(t_{-v-w}: \tilde{t}_w:u_v) g(t_{-v-w}: \tilde{t}_w:u_v) du_v \right)
\end{split}
\end{equation*}
to $\int_{R\setminus T} f dg.$ Next, we consider the allocation of $[fg]_{\tilde{t}}^t$ present in \eqref{wholeintegral}. Since $t\in T$ and $t\notin \overline{R\setminus T},$ we put $f(t)g(t)$ into $\int_T f dg$. Moreover, similarly to earlier, it can be verified that $(t_{-v}: \tilde{t}_v) \in R\setminus T$, whenever $|v| \geq 2.$ Thus, we allocate such terms $f(t_{-v}: \tilde{t}_{v})g(t_{-v}: \tilde{t}_{v})$ into $\int_{R\setminus T} fdg.$ When $|v| =1$, then $(t_{-v}: \tilde{t}_v) \in T \cap \overline{R\setminus T}$, so the allocation of $\sum_{|v|=1} f(t_{-v}:\tilde{t}_v)g(t_{-v}:\tilde{t}_v)$ is not entirely unambiguous. However, the points $(t_{-v}: \tilde{t}_v)$ are positioned symmetrically in our configuration and hence, we require equal weights for the corresponding terms. In addition, we want that $\int_T f dg =0$ whenever $\sum t = 0$. In this case, all integrals of type $\int_{T_v}$ vanish, since $T_v$ is a set of a single point. Moreover, recall that now $\tilde{t} = t$. Hence, for $\sum t =0$, we have
$$\int_T f dg = f(t)g(t) - a\sum_{|v|=1} f(t_{-v}:\tilde{t}_v)g(t_{-v}:\tilde{t}_v) = (1-aN) f(t)g(t) = 0$$
giving $a= \frac{1}{N}.$

Based on the above considerations, we define
\begin{equation}
\begin{split}
\label{T>0}
&\int_T f(u) dg(u) 
= f(t)g(t) - \frac{1}{N} \sum_{|v|=1} f(t_{-v}:\tilde{t}_v)g(t_{-v}:\tilde{t}_v)\\ &\quad + \sum_{\emptyset\neq v \subseteq \{1,\dots,N\}} (-1)^{|v|}\int_{T_v} f_{t_v}(t_{-v}:u_v) g(t_{-v}:u_v)du_v
\end{split}
\end{equation}
and 
\begin{equation*}
\begin{split}
&\int_{R \setminus T} f(u) dg(u) = - \frac{N-1}{N} \sum_{|v|=1} f(t_{-v}:\tilde{t}_v)g(t_{-v}:\tilde{t}_v)\\
&\quad + \sum_{|v| \geq 2} (-1)^{|v|} f(t_{-v}:\tilde{t}_v)g(t_{-v}:\tilde{t}_v)\\
&\quad + \sum_{\emptyset\neq v \subseteq \{1,\dots,N\}} (-1)^{|v|}\left(\int_{R_v\setminus T_v}  f_{t_v}(t_{-v}:u_v) g(t_{-v}:u_v)du_v \right.\\
&\quad + \left. \int_{R_v} \sum_{\emptyset \neq w \subseteq -v} (-1)^{|w|} f_{t_v}(t_{-v-w}: \tilde{t}_w:u_v) g(t_{-v-w}: \tilde{t}_w:u_v) du_v \right).
\end{split}
\end{equation*}
Indeed, by denoting $fg = h$ and $f_{t_v}g = h^{(v)}$, we have 
\begin{equation*}
\begin{split}
&\int_{R \setminus T} f(u) dg(u) + \int_{ T} f(u) dg(u) = h(t) - \sum_{|v|=1} h(t_{-v}:\tilde{t}_v) + \sum_{|v|\geq 2} (-1)^{|v|}h(t_{-v}:\tilde{t}_v)\\
&\quad + \sum_{\emptyset\neq v \subseteq \{1,\dots,N\}} (-1)^{|v|}\left( \int_{T_v} h^{(v)}(t_{-v}:u_v) du_v + \int_{R_v\setminus T_v}  h^{(v)}(t_{-v}:u_v)du_v\right. \\
&\quad + \left. \int_{R_v} \sum_{\emptyset \neq w \subseteq -v} (-1)^{|w|} h^{(v)}(t_{-v-w}: \tilde{t}_w:u_v)  du_v\right)\\
&= \sum_{v \subseteq \{1,\dots,N\}} (-1)^{|v|} h(t_{-v}:\tilde{t}_v)\\
&\quad + \sum_{\emptyset\neq v \subseteq \{1,\dots,N\}} (-1)^{|v|}\int_{R_v} \sum_{w\subseteq -v} (-1)^{|w|} h^{(v)}(t_{-v-w}: \tilde{t}_w:u_v)\\
&= [fg]_{\tilde{t}}^t + \sum_{\emptyset\neq v \subseteq \{1,\dots,N\}} (-1)^{|v|} \int_{R_v} \left[ h^{(v)}(u) du_v\right]_{\tilde{t}_{-v}}^{t_{-v}}\\
&= [fg]_{\tilde{t}}^t +  \sum_{\emptyset\neq v \subseteq \{1,\dots,N\}} (-1)^{|v|} \int_{\tilde{t}_v}^{t_v}  \left[f_{t_v}(u) g(u) du_v \right]_{\tilde{t}_{-v}}^{t_{-v}}\\ 
&= \int_{\tilde{t}}^t f(u) dg(u)= \int_R f(u) dg(u)
\end{split}
\end{equation*}
as desired. Note also that when $\sum t = 0$, then all the related domains are degenerate, and $\tilde{t} = t$. Since in this case $\int_R fdg =0$ and $\int_T fdg =0$ by the definitions, then also $\int_{R\setminus T} fdg =0$ by the additivity property shown above. 
\begin{rem}
Note that in our definitions, we have not considered terms that involve integration over the hyperplane that cuts the hypercube $R$ into two pieces, which might seem cumbersome at first sight. Indeed, in principle, one should also take into account the values on the cutting hyperplane by adding integrals defined in a natural manner. Note that the contribution of these integrals would be a sum of functions where each function depends only on a single corner point on the cutting hyperplane. By the construction of $T=T(t)$ (see also Example \ref{ex:T}), each of these corner points does not depend on all $N$ variables, and consequently, the integrals related to the cutting hyperplane have zero rectangular increments. In particular, for our purposes, the integration over the region $T$ is only required to define $G$ via such integrals in Lemma \ref{lemma:G}, which in turn is needed to obtain $dG(t) = e^{-\Theta\cdot t}dY(e^t)$ in the multiple Riemann-Stieltjes sense. As such, adding terms related to the cutting hyperplane would not change $dG(t)$ in the multiple Riemann-Stieltjes sense, and it is convenient to omit these terms, making our already notationally heavy computations simpler. 
\end{rem}

Next, we consider the setting with $\sum_{l=1}^N t_l  <0$. Now, we set
\begin{equation}
\label{T2}
T = T(t)= \{x\in\mathbb{R}^N : x_l \geq t_l\text{ for all } l, \sum x \leq 0\}.
\end{equation}
It can be shown that
\begin{equation*}
\begin{split}
T = \left\{x\in\mathbb{R}^N :  t_1 \leq x_1 \leq-\sum_{l\neq 1}t_l, t_2 \leq x_2 \leq-x_1-\sum_{l \notin \{1,2\}} t_l,\right.\\
\left.\dots, t_N \leq x_N \leq  -\sum_{l\neq N}x_l  \right\}.
\end{split}
\end{equation*}
Similarly, if $v \subseteq \{1,\dots,N\}$, then
$$T_v =T_v(t)= \{ x\in\mathbb{R}^{|v|} : x_l \geq t_{v_l} \text{ for all } l, \sum x+ \sum t_{-v} \leq 0\}.$$
It can be shown that
\begin{equation}
\begin{split}
\label{Tv2}
T_v = \left\{ t_{v_1}\leq x_1 \leq -\sum_{l\neq v_1} t_l,  t_{v_2} \leq x_2 \leq -x_1-\sum_{l \notin \{v_1,v_2\}} t_l,\right.\\
\left. \dots,  t_{v_{|v|}} \leq x_{|v|}\leq  -\sum_{l\neq |v|}x_l  - \sum t_{-v} \right\}.
\end{split}
\end{equation}
Now $R_v = [t_v,\tilde{t}_v]$ is a hyperrectangle containing $T_v$. Again, we take additivity as our starting point. By Remark \ref{rem:extension},
\begin{equation*}
\begin{split}
\int_R f dg  &= [fg]_t^{\tilde{t}} + \sum_{\emptyset \neq v \subseteq \{1,\dots, N\}} (-1)^{|v|} \int_{R_v} [f_{t_v}(u) g(u) du_v]_{t_{-v}}^{\tilde{t}_{-v}}\\
&= (-1)^N[fg]_{\tilde{t}}^t + \sum_{\emptyset \neq v \subseteq \{1,\dots, N\}} (-1)^{|v|} (-1)^{|-v|}\int_{R_v} [f_{t_v}(u) g(u) du_v]_{\tilde{t}_{-v}}^{t_{-v}}\\
&= (-1)^N\left([fg]_{\tilde{t}}^t + \sum_{\emptyset \neq v \subseteq \{1,\dots, N\}} \int_{R_v} [f_{t_v}(u) g(u) du_v]_{\tilde{t}_{-v}}^{t_{-v}} \right).
\end{split}
\end{equation*}
Similarly as in the case of $\sum t \geq 0$, this justifies defining 
\begin{equation}
\label{superintegral2}
\begin{split}
&\int_T f(u) dg(u)  = (-1)^N \left[f(t) g(t)  - \frac{1}{N} \sum_{|v| = 1} f(t_{-v} :\tilde{t}_v) g(t_{-v} :\tilde{t}_v)  \right.\\
&\quad + \left.
\sum_{\emptyset\neq v \subseteq \{1,\dots,N\}}\int_{T_v} {f}_{t_v}({t_{-v}:u_v}) g(t_{-v}:u_v) du_v\right]
\end{split}
\end{equation}
and

\begin{equation*}
\begin{split}
&\int_{R \setminus T} f(u) dg(u) = (-1)^N \Bigg[- \frac{N-1}{N} \sum_{|v|=1} f(t_{-v}:\tilde{t}_v)g(t_{-v}:\tilde{t}_v)\\
&\quad + \sum_{|v| \geq 2} (-1)^{|v|} f(t_{-v}:\tilde{t}_v)g(t_{-v}:\tilde{t}_v)\\
&\quad + \sum_{\emptyset\neq v \subseteq \{1,\dots,N\}} \left(\int_{R_v\setminus T_v}  f_{t_v}(t_{-v}:u_v) g(t_{-v}:u_v)du_v \right.\\
&\quad + \left. \int_{R_v} \sum_{\emptyset \neq w \subseteq -v} (-1)^{|w|} f_{t_v}(t_{-v-w}: \tilde{t}_w:u_v) g(t_{-v-w}: \tilde{t}_w:u_v) du_v \right) \Bigg].
\end{split}
\end{equation*}
We also note that, by the continuity of the involved functions and the fact that $\lim_{\sum t \to 0} \tilde{t} \to t$, we obtain directly from the definitions that 
$$\lim_{\sum t \to 0} \int_T f(u) dg(u) = 0 = \lim_{\sum t \to 0} \int_R f(u) dg(u) $$
implying also that $\lim_{\sum t \to 0} \int_{R\setminus T} f(u) dg(u) \to 0$. Therefore, our integrals are continuous with respect to $T = T(t)$ also on the hyperplane $\sum t = 0$.

\section{Characterization of continuous stationary fields}
\label{sec:characterisation}
In this section we provide our characterization of stationary fields via generalized Langevin equation. We begin with the following lemma.
\begin{lemma}
\label{lemma:Y}
Let $\Theta\in (0,\infty)^N$ and $G = \{G(t)\}_{t\in\mathbb{R}^N}\in \mathcal{G}_\Theta.$ Then 
\begin{equation*}
Y(e^t) = \int_{-\infty}^t e^{\Theta\cdot u} dG(u)
\end{equation*} 
is $\Theta$-self-similar. Moreover, $dY(e^t) = e^{\Theta\cdot t}dG(t)$ in the multiple Riemann-Stieltjes sense for every integrand $g$ such that $g_t$ is continuous.
\begin{proof}
If $G\in\mathcal{G}_\Theta$, then
$$ Y(e^{t+s}) = \int_{-\infty}^{t+s} e^{\Theta\cdot u} dG(u), $$
where the change of variable $r=u-s$ gives
$$Y(e^{t+s}) = \int_{-\infty}^{t} e^{\Theta\cdot (r+s) } dG(r+s) \overset{\text{law}}{=} e^{\Theta\cdot s}\int_{-\infty}^{t} e^{\Theta\cdot r } dG(r) =e^{\Theta\cdot s} Y(e^t) $$
by stationarity of increments of $G$. The equality of laws can be verified by studying Riemann-Stieltjes sums and passing to the limit. Treating multidimensional distributions similarly shows that $Y$ is $\Theta$-self-similar. 

For the latter claim, we consider integrals over a compact hyperrectangle, where the integrand $g$ has the continuous mixed partial derivative $g_t$. Let
$$Y_u(e^t) = \int_{-\bar{u}}^t e^{\Theta\cdot r} dG(r),$$
where $\bar{u} = (u,\dots, u)\in\mathbb{R}^N$ is such that $-u < t_i$ for all $i$.
Recall that if $R, R_1$, and $R_2$ are hyperrectangles such that $R = R_1\cup R_2$ and the interiors of $R_1$ and $R_2$ are disjoint, then
$$\int_R f dg = \int_{R_1} f dg + \int_{R_2} f dg$$
whenever the integrals exist. Hence, for large enough $u$ we have that
\begin{equation*}
\begin{split}
&\Delta_1\dots \Delta_N Y_u(e^{x_{i_1}^{(1)}}, \dots, e^{x_{i_N}^{(N)}})\\
 &=\Delta_1\dots \Delta_N \int_{-u}^{x_{i_1}^{(1)}}\dots \int_{-u}^{x_{i_N}^{(N)}}e^{\Theta\cdot r} dG(r)\\
 &= \Delta_1\dots \Delta_{N-1}\left(\int_{-u}^{x_{i_1}^{(1)}}\dots \int_{-u}^{x_{i_N}^{(N)}}e^{\Theta\cdot r} dG(r) - \int_{-u}^{x_{i_1}^{(1)}}\dots \int_{-u}^{x_{i_N-1}^{(N)}}e^{\Theta\cdot r} dG(r) \right )\\
 &= \Delta_1\dots \Delta_{N-1} \int_{-u}^{x_{i_1}^{(1)}}\dots \int_{-u}^{x_{i_{N-1}}^{(N-1)}}\int_{x_{i_N-1}^{(N)}}^{x_{i_N}^{(N)}}e^{\Theta\cdot r} dG(r)\\
 &= \dots = \int_{x_{i_1-1}^{(1)}}^{x_{i_1}^{(1)}} \dots \int_{x_{i_N-1}^{(N)}}^{x_{i_N}^{(N)}} e^{\Theta\cdot r} dG(r) = \Delta_1\dots \Delta_N Y(e^{x_{i_1}^{(1)}}, \dots, e^{x_{i_N}^{(N)}}).
\end{split}
\end{equation*}
Together with the compactness of the integration domain, this allows us to write $dY_u(e^t) = dY(e^t)$. By integration by parts, this yields
\begin{equation*}
\begin{split}
Y_u(e^t) &= \int_{-\bar{u}}^t e^{\Theta\cdot r} dG(r) = [e^{\Theta\cdot r} G(r)]_{-\bar{u}}^t + \sum_{\emptyset\neq v \subseteq \{1,\dots,N\}}(-1)^{|v|} \int_{-\bar{u}_v}^{t_v} [G(r)d_ve^{\Theta\cdot r}]_{-\bar{u}_{-v}}^{t_{-v}}\\
&= [e^{\Theta\cdot r} G(r)]_{-\bar{u}}^t + \sum_{\emptyset\neq v \subseteq \{1,\dots,N\}}(-1)^{|v|} \int_{-\bar{u}_v}^{t_v} [\prod_{j\in v} (\theta_j) G(r)e^{\Theta\cdot r} dr_v]_{-\bar{u}_{-v}}^{t_{-v}},
\end{split}
\end{equation*}
where
\begin{equation}
\label{combo}
\begin{split}
d [e^{\Theta\cdot r} G(r)]_{-\bar{u}}^t &= d(e^{\Theta \cdot t} G(t)) = \sum_{u\subseteq \{1,\dots,N\}} \prod_{j\in u} (\theta_j) e^{\Theta \cdot t} d_{-u}G(t) dt_u\\
&= e^{\Theta \cdot t} dG(t) + e^{\Theta \cdot t}\sum_{\substack{u\subseteq \{1,\dots,N\}\\ |u| \geq 1} } \prod_{j\in u} (\theta_j)  d_{-u}G(t) dt_u.
\end{split}
\end{equation}
The first two equalities follow from the fact that $u$ is a constant as the total differential is taken with respect to $t$ together with Theorem \ref{theorem:productrule} (see also Theorem 2.2.5 in \cite{prause2015sequential}). In addition, by Lemma \ref{lemma:fundamental2},

\begin{equation}
\label{combo2}
\begin{split}
d  \int_{-\bar{u}_v}^{t_v} [ G(r)e^{\Theta\cdot r} dr_v]_{-\bar{u}_{-v}}^{t_{-v}} &= 
 d_{-v}\left( G(t) e^{\Theta\cdot t} \right)dt_v\\
&= e^{\Theta \cdot t} \sum_{u\subseteq -v} \prod_{j\in u} (\theta_j) d_{-v-u} G(t) dt_u dt_v.
\end{split}
\end{equation}
By combining \eqref{combo} and \eqref{combo2}, we obtain that
\begin{equation}
\label{kukkuukukkuu}
\begin{split}
dY_u(e^t) &= e^{\Theta \cdot t} dG(t) + e^{\Theta \cdot t}\sum_{\substack{u\subseteq \{1,\dots,N\}\\ |u| \geq 1} } \prod_{j\in u} (\theta_j)  d_{-u}G(t) dt_u\\
&+ e^{\Theta \cdot t} \sum_{\emptyset\neq v \subseteq \{1,\dots,N\}}(-1)^{|v|}\sum_{w\subseteq -v} \prod_{j\in v+w} (\theta_j) d_{-v-w} G(t) dt_{v+w}.
\end{split}
\end{equation}
We complete the proof by showing that the sum of the last two terms above is zero. For this, let $u\subseteq \{1,\dots,N\}$ such that $|u| \geq 1$ be fixed and set $v+w = u$. Then the total number of terms
$$\prod_{j\in u} (\theta_j)  d_{-u}G(t) dt_u$$
in \eqref{kukkuukukkuu} is 
\begin{equation*}
\sum_{m=0}^{|u|} (-1)^m\binom{|u|}{m} = 0
\end{equation*}
by Lemma \ref{binomial}. This completes the proof.
\end{proof}
\end{lemma}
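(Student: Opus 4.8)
The plan is to treat the two assertions separately, working throughout against a fixed test integrand $g$ with continuous $g_t$, since the asserted differential identities are understood in the multiple Riemann--Stieltjes sense. For the self-similarity claim I would verify the defining identity of Definition \ref{defi:self-similarity} directly. Writing $Y(e^{t+s}) = \int_{-\infty}^{t+s} e^{\Theta\cdot u}\,dG(u)$ and performing the change of variables $r = u-s$, I would factor out $e^{\Theta\cdot s}$ to get $Y(e^{t+s}) = e^{\Theta\cdot s}\int_{-\infty}^t e^{\Theta\cdot r}\,dG(r+s)$. The remaining step is to show $\int_{-\infty}^t e^{\Theta\cdot r}\,dG(r+s) \overset{\text{law}}{=} \int_{-\infty}^t e^{\Theta\cdot r}\,dG(r)$. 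Since the approximating Riemann--Stieltjes sums are finite linear combinations of rectangular increments of $G$ evaluated at shifted grid points, and $G$ has stationary increments, these sums share their joint law before and after the shift by $s$; passing to the probabilistic limit transfers the identity to the integrals. Carrying this out jointly for finitely many parameters $t^{(1)},\dots,t^{(k)}$ yields equality of finite-dimensional distributions, hence $\Theta$-self-similarity.

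For the differential identity, the plan is to localize to a compact hyperrectangle and then expand via integration by parts. First I would introduce the truncation $Y_u(e^t) = \int_{-\bar{u}}^t e^{\Theta\cdot r}\,dG(r)$ with $\bar{u} = (u,\dots,u)$, and use additivity of Riemann--Stieltjes integrals over hyperrectangles to see that the rectangular increment $\Delta_1\cdots\Delta_N Y_u$ over any cell equals $\Delta_1\cdots\Delta_N Y$: the fixed lower corner $-\bar{u}$ telescopes away under the difference operators, leaving exactly the integral over the cell. This justifies replacing $dY$ by $dY_u$ on the compact domain. Next I would apply the integration-by-parts formula of Lemma \ref{lemma:ipp} to write $Y_u$ as the boundary term $[e^{\Theta\cdot r}G(r)]_{-\bar{u}}^t$ plus a signed sum of lower-dimensional integrals, using Theorem \ref{lemma:radon} to replace $d_v e^{\Theta\cdot r}$ by $\prod_{j\in v}\theta_j\, e^{\Theta\cdot r}\,dr_v$.

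Differentiating the resulting pieces is where the real work lies. For the boundary term I would invoke the product rule of Theorem \ref{theorem:productrule} applied to $e^{\Theta\cdot t}G(t)$, producing $\sum_{u}\prod_{j\in u}\theta_j\, e^{\Theta\cdot t} d_{-u}G(t)\,dt_u$, whose $u=\emptyset$ summand is precisely the desired $e^{\Theta\cdot t}\,dG(t)$ and whose remainder must ultimately cancel. For each of the integral terms I would apply Lemma \ref{lemma:fundamental2} to compute its differential, generating further families of terms of the form $\prod_j\theta_j\, e^{\Theta\cdot t} d_{\cdot}G(t)\,dt_{\cdot}$ carrying the integration-by-parts signs $(-1)^{|v|}$. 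Collecting everything then expresses $dY_u$ as $e^{\Theta\cdot t}\,dG(t)$ plus a sum of remainder terms indexed by subsets.

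The main obstacle is the concluding combinatorial cancellation, namely that all remainder terms vanish. I would organize them by the total index set $u$ (writing $v+w=u$), so that the coefficient of each fixed differential $\prod_{j\in u}\theta_j\, d_{-u}G(t)\,dt_u$ collapses to the signed count $\sum_{m=0}^{|u|}(-1)^m\binom{|u|}{m}$, which is zero by Lemma \ref{binomial}. The delicate point is the bookkeeping that shows these coefficients really assemble into binomial sums with the correct alternating signs, coming from the interplay of the $(-1)^{|v|}$ factors in the integration by parts with the product-rule and Lemma \ref{lemma:fundamental2} expansions; once that is verified, the differential identity follows, and since it holds on every compact hyperrectangle it holds for $Y$.
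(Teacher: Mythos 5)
Your proposal follows the paper's own proof essentially step for step: the same change of variables and Riemann--Stieltjes-sum argument for self-similarity, the same truncation $Y_u$ with the telescoping additivity argument, the same use of Lemma \ref{lemma:ipp}, Theorem \ref{lemma:radon}, Theorem \ref{theorem:productrule}, and Lemma \ref{lemma:fundamental2}, and the same final cancellation via the binomial identity of Lemma \ref{binomial} after grouping terms by $v+w=u$. The approach is correct and matches the paper's argument.
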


\begin{lemma}
\label{lemma:G}
Let $\Theta\in (0,\infty)^N$ and $Y = \{Y(e^t)\}_{t\in\mathbb{R}^N}$ be $\Theta$-self-similar. Then 
\begin{equation}
\label{eq:G}
G(t) = \begin{cases} 
      \int_T e^{-\Theta\cdot u} dY(e^u), & \sum_{l=1}^N t_l \geq 0 \\
      (-1)^N\int_T e^{-\Theta\cdot u} dY(e^u), & \sum_{l=1}^N t_l < 0, 
   \end{cases}
\end{equation}
where $T=T(t)$ is as in \eqref{T} and \eqref{T2}, belongs to $\mathcal{G}_{\Theta,0}$. Moreover, $dG(t) = e^{-\Theta\cdot t} d Y(e^t)$ in the multiple Riemann-Stieltjes sense for every integrand $g$ such that $g_t$ is continuous.
%= \int_{-t_2-\dots-t_N}^{t_1} \int_{-u_1-t_3-\dots-t_N}^{t_2} \dots \int_{-u_1-\dots-u_{N-1}}^{t_N} e^{-\Theta\cdot u} dY(e^u)
\begin{proof}
By the definition of the integral,  $G(t) = 0$ for every $t$ such that $\sum_l t_l=0$. Moreover, from \eqref{T>0} and \eqref{superintegral2},
\begin{equation}
\label{t>0}
\begin{split}
G(t)
&= e^{-\Theta\cdot t} Y(e^t)-\frac{1}{N} \sum_{|v| =1}Y(e^{t_{-v}:\tilde{t}_v})e^{-\Theta\cdot ({t_{-v}:\tilde{t}_v})} \\ &\quad + \sum_{\emptyset\neq v \subseteq \{1,\dots,N\}} \prod_{j\in v}(\theta_j) \int_{T_v} Y(e^{t_{-v}:u_v})e^{-\Theta\cdot ({t_{-v}:u_v})}du_v
\end{split}
\end{equation}
for every $\sum t\geq 0$ and 
\begin{equation}
\label{t<0}
\begin{split}
 G(t)
&= e^{-\Theta\cdot t} Y(e^t)-\frac{1}{N} \sum_{|v| =1} Y(e^{t_{-v}:\tilde{t}_v})e^{-\Theta\cdot ({t_{-v}:\tilde{t}_v})} \\ &\quad + \sum_{\emptyset\neq v \subseteq \{1,\dots,N\}}(-1)^{|v|} \prod_{j\in v}(\theta_j) \int_{T_v} Y(e^{t_{-v}:u_v})e^{-\Theta\cdot ({t_{-v}:u_v})}du_v   
\end{split}
\end{equation}
for every $\sum t < 0$. First, note that since $Y(e^{t_{-v}:\tilde{t}_v})e^{-\Theta\cdot ({t_{-v}:\tilde{t}_v})}$ depends only on $N-1$ of the variables, we obtain that 
\begin{equation}
\label{zerodifferential}
d Y(e^{t_{-v}:\tilde{t}_v})e^{-\Theta\cdot ({t_{-v}:\tilde{t}_v})}=0.
\end{equation}
Without loss of generality, we assume that $v=\{1,\dots,|v|\}$. In addition, we denote 
$$Y(e^{t_{-v}:u_v})e^{-\Theta\cdot ({t_{-v}:u_v})} = f(t_{-v}:u_v).$$
Then, by \eqref{T_v} for $\sum t \geq 0$,
\begin{equation*}
\begin{split}
&\int_{T_v} Y(e^{t_{-v}:u_v})e^{-\Theta\cdot ({t_{-v}:u_v})}du_v \\ &=\int_{-t_2-\dots-t_{N}}^{t_1} \int_{-u_1-t_3-\dots-t_{N}}^{t_2} \dots \int_{-u_1-\dots-u_{|v|-1}-t_{|v|+1}-\dots-t_N}^{t_{|v|}} f({t_{-v}:u_v}) du_v \eqqcolon F(t).
\end{split}
\end{equation*}
Similarly, by \eqref{Tv2} for $\sum t < 0$,
\begin{equation*}
\begin{split}
&(-1)^{|v|} \int_{T_v} Y(e^{t_{-v}:u_v})e^{-\Theta\cdot ({t_{-v}:u_v})}du_v \\ &=(-1)^{|v|}\int_{t_1}^{-t_2-\dots-t_{N}} \int_{t_2}^{-u_1-t_3-\dots-t_{N}} \dots \int_{t_{|v|}}^{-u_1-\dots-u_{|v|-1}-t_{|v|+1}-\dots-t_N} f({t_{-v}:u_v}) du_v \\
&= (-1)^{|v|}(-1)^{|v|}\int_{-t_2-\dots-t_{N}}^{t_1} \int_{-u_1-t_3-\dots-t_{N}}^{t_2} \dots \int_{-u_1-\dots-u_{|v|-1}-t_{|v|+1}-\dots-t_N}^{t_{|v|}} f({t_{-v}:u_v}) du_v\\
&= F(t).
\end{split}
\end{equation*}
In order to apply this observation to \eqref{t>0} and \eqref{t<0}, we first want to compute the mixed partial derivative $F_{t_v}(t)$. Denote
$$F(t) = \int_{-t_2-\dots-t_N}^{t_1} F^{(1)}(t_{\{2,\dots,N\}}: u_1) du_1.$$
Then
\begin{equation*}
\begin{split}
 &\frac{d}{dt_1} F(t) = F^{(1)} (t) \\
 &= \int_{-t_1-t_3-\dots -t_N}^{t_2} \int_{-t_1-u_2-t_4-\dots -t_N}^{t_3} \dots\\
 &\quad\int_{-t_1 - u_2 -\dots -u_{|v|-1} - t_{|v|+1}-\dots- t_N}^{t_{|v|}} f(t_{-v}:t_1:u_{\{2,\dots,|v|\}})  du_{\{3,\dots,|v|\}}du_2\\
 & \eqqcolon \int_{-t_1-t_3-\dots -t_N}^{t_2} F^{(2)}(t_{\{1,3,4,\dots,N\}}:u_2) du_2.
 \end{split}
 \end{equation*}
 Hence,
 \begin{equation*}
 \begin{split}
 &\frac{d^2}{dt_1dt_2} F(t) = \frac{d}{dt_2} F^{(1)}(t) = F^{(2)} (t)\\
 &= \int_{-t_1-t_2-t_4-\dots-t_N}^{t_3} \int_{-t_1-t_2-u_3-t_5-\dots -t_N}^{t_4} \dots\\
 &\quad \int_{-t_1-t_2-u_3-\dots -u_{|v|-1}-t_{|v|+1} - \dots -t_N}^{t_{|v|}} f(t_{-v}: t_1:t_2: u_{\{3,\dots,|v|\}}) du_{\{3,\dots,|v|\}}.
 \end{split}
 \end{equation*}
 By continuing similarly, we obtain 
 $$\frac{d}{dt_v} F(t) = \frac{d}{dt_{|v|}} \int_{-t_1-\dots-t_N}^{t_{|v|}} f(t_{-v}:t_{\{1,\dots,|v|-1\}}:u_{|v|}) du_{|v|}= f(t).$$
 Now, by Theorem \ref{theorem:radon2}, 
 $$dF(t) = d_{-v}F_{t_v}(t)dt_v = d_{-v} f(t) dt_v = d_{-v} \left(Y(e^t) e^{-\Theta\cdot t} \right) dt_v.$$
By combining with \eqref{zerodifferential} and then applying Theorem \ref{theorem:productrule}, we get
\begin{equation*}
\begin{split}
dG(t) &= d\left(e^{-\Theta\cdot t} Y(e^t)\right) + \sum_{\emptyset\neq v \subseteq \{1,\dots,N\}}\prod_{j\in v}(\theta_j) d_{-v} \left(Y(e^t) e^{-\Theta\cdot t} \right) dt_v\\
&= \sum_{\substack{v\subseteq \{1,\dots,N\}}}\prod_{j\in v}(\theta_j) d_{-v} \left(Y(e^t) e^{-\Theta\cdot t} \right) dt_v\\
&= \sum_{\substack{v\subseteq \{1,\dots,N\}}}\prod_{j\in v}(\theta_j) \sum_{w\subseteq -v} \prod_{j\in w}(-\theta_j) e^{-\Theta\cdot t} d_{-v-w} Y(e^t) dt_{v+w}\\
&= e^{-\Theta\cdot t}\sum_{\substack{v\subseteq \{1,\dots,N\}}} \sum_{w\subseteq -v} \prod_{j\in v+ w}(\theta_j)(-1)^{|w|} d_{-v-w} Y(e^t) dt_{v+w}.
\end{split}
\end{equation*}
When $v=w=\emptyset$, we get from above $e^{-\Theta\cdot t} dY(e^t)$. Note that for $v+w =q \neq \emptyset$, we observe by Lemma \ref{binomial} as before that the terms  $\prod_{j\in q} (\theta_j) d_{-q} Y(e^t) dt_q$ vanish.
Hence, we have shown that $dG(t) = e^{-\Theta\cdot t} dY(e^t)$. With this at hand, we can write
\begin{equation*}
 [G]_{s+h}^{t+h} = \int_{s+h}^{t+h} e^{-\Theta\cdot u} dY(e^u),
\end{equation*}
from which the change of variable $r = u-h$ and $\Theta$-self-similarity of $Y$ gives
\begin{equation*}
\begin{split}
 [G]_{s+h}^{t+h}&= \int_{s}^{t} e^{-\Theta\cdot (r+h)} dY(e^{r+h}) \overset{\text{law}}{=}  \int_{s}^{t} e^{\Theta\cdot h}e^{-\Theta\cdot (r+h)} dY(e^r)\\
 &= \int_{s}^{t}e^{-\Theta\cdot r} dY(e^r) = [G]_s^t.
\end{split}
\end{equation*}
The equality of laws can be verified by applying the self-similarity property of $Y$ to Riemann-Stieltjes sums, and passing to the limit. As multidimensional distributions can be treated similarly, we obtain that $G$ has stationary increments.

Finally, we show that 
$\int_{-\infty}^{t} e^{\Theta\cdot u} dG(r)$ is a well-defined random variable for all $t$. We have that
\begin{equation}
\label{sidedinverse}
 \int_{-\infty}^{t} e^{\Theta\cdot r} dG(r) =  \int_{-\infty}^{t}  dY(e^r) = \lim_{u\to\infty}[Y(e^r)]_{-\bar{u}}^t= Y(e^t),
 \end{equation}
since the values of $Y(e^r)$ at the other corner points except at $t$ tend to zero in probability by self-similarity of $Y$.
\end{proof}
\end{lemma}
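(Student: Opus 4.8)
The statement bundles three claims: that $G(t)$ vanishes whenever $\sum_l t_l = 0$, that the differential identity $dG(t) = e^{-\Theta\cdot t}\,dY(e^t)$ holds in the multiple Riemann-Stieltjes sense, and that $G\in\mathcal{G}_{\Theta,0}$, i.e. $G$ has stationary increments and the limit of Definition \ref{defi:GH} exists as an almost surely finite random variable. My plan is to treat the vanishing as immediate, to establish the differential identity as the technical core (this is where I expect the real work), and then to derive stationary increments and integrability as consequences of that identity together with the $\Theta$-self-similarity of $Y$. The vanishing is built into the construction: when $\sum_l t_l = 0$ the region $T=T(t)$ collapses to a single point and $\tilde t = t$, and the definitions \eqref{T>0} and \eqref{superintegral2} were normalized (via the weight $a = 1/N$) precisely so that $\int_T f\,dg = 0$ in this degenerate case; hence $G(t)=0$ there.

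For the differential identity I would start from the explicit forms \eqref{t>0} and \eqref{t<0} of $G(t)$. The boundary sum $\sum_{|v|=1} Y(e^{t_{-v}:\tilde t_v})e^{-\Theta\cdot(t_{-v}:\tilde t_v)}$ has vanishing differential, since for $|v|=1$ the component $\tilde t_v$ is a function of the remaining coordinates, so each summand depends on only $N-1$ of the variables and its rectangular increments are zero, exactly as exploited in Lemma \ref{lemma:fundamental2}. The substantial part is differentiating the simplex integrals $F(t) = \int_{T_v} Y(e^{t_{-v}:u_v})e^{-\Theta\cdot(t_{-v}:u_v)}\,du_v$. Writing $f(t) = Y(e^t)e^{-\Theta\cdot t}$, I would compute the mixed partial $F_{t_v}$ by applying the fundamental theorem of calculus one coordinate at a time in $t_{v_1}, t_{v_2}, \dots$: at each step the differentiation picks up only the upper limit of the corresponding nested integral, while the moving lower limits do not involve the coordinate being differentiated at that step and so contribute nothing. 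This yields the clean identity $F_{t_v}(t) = f(t)$, and Theorem \ref{theorem:radon2} then converts it into $dF(t) = d_{-v} f(t)\,dt_v$ in the required sense.

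Assembling these pieces is where the combinatorics enters. Expanding $d(e^{-\Theta\cdot t}Y(e^t))$ by the product rule of Theorem \ref{theorem:productrule} and combining with the simplex-integral differentials, I would group the resulting terms by the index set $q = v + w$. For each fixed $q\neq\emptyset$, the factor $\prod_{j\in q}\theta_j\, d_{-q}Y(e^t)\,dt_q$ occurs with total coefficient $\sum_{m=0}^{|q|}(-1)^m\binom{|q|}{m}$, which vanishes by Lemma \ref{binomial}; only the term $q=\emptyset$ survives, giving $dG(t) = e^{-\Theta\cdot t}\,dY(e^t)$ tested against every integrand with continuous $g_t$. I expect the bookkeeping in this cancellation, together with the moving-boundary derivative of the simplex integral, to be the main obstacle.

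Granted the differential identity, the two remaining properties follow readily. For stationary increments I would write $[G]_{s+h}^{t+h} = \int_{s+h}^{t+h} e^{-\Theta\cdot u}\,dY(e^u)$, substitute $r = u-h$, and apply the $\Theta$-self-similarity of $Y$ at the level of the Riemann-Stieltjes sums, which in law replaces $dY(e^{r+h})$ by $e^{\Theta\cdot h}\,dY(e^r)$; this gives $[G]_{s+h}^{t+h}\overset{\text{law}}{=}[G]_s^t$, and treating several increments jointly yields Definition \ref{defi:stationaryincrements}. For the integrability condition, the identity gives $\int_{-\infty}^t e^{\Theta\cdot u}\,dG(u) = \int_{-\infty}^t dY(e^u) = \lim_{u\to\infty}[Y(e^r)]_{-\bar u}^t = Y(e^t)$, where the corner values other than at $t$ tend to zero in probability by self-similarity. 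This shows the limit is a well-defined finite random variable, so $G\in\mathcal{G}_\Theta$, and combined with the vanishing on $\sum_l t_l=0$ gives $G\in\mathcal{G}_{\Theta,0}$.
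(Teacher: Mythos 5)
Your proposal is correct and follows essentially the same route as the paper's own proof: vanishing on $\sum_l t_l=0$ by the normalization of the triangle integral, the differential identity via the zero-differential boundary terms, coordinate-by-coordinate differentiation of the simplex integrals giving $F_{t_v}=f$, Theorem \ref{theorem:radon2}, the product rule, and the binomial cancellation of Lemma \ref{binomial}, followed by stationarity of increments via the change of variables $r=u-h$ and the identity $\int_{-\infty}^{t}e^{\Theta\cdot u}\,dG(u)=Y(e^t)$ for membership in $\mathcal{G}_{\Theta,0}$. The only detail you leave implicit is the sign reconciliation showing that the factor $(-1)^{|v|}$ in the case $\sum_l t_l<0$ cancels against the orientation reversal of the iterated integrals, so that both cases yield the same function $F(t)$ and the differential computation applies globally.
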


\begin{theorem}
\label{theorem:bijection}
Let $\Theta\in (0,\infty)^N$. Let $G=\{G(t)\}_{t\in\mathbb{R}^N}\in\mathcal{G}_{\Theta,0}$ and $Y=\{Y(e^t)\}_{t\in\mathbb{R}^N}$ be $\Theta$-self-similar. The transformation
 $$(\mathcal{M}_\Theta Y)(t) \coloneqq \begin{cases} 
      \int_T e^{-\Theta\cdot u} dY(e^u), & \sum_{l=1}^N t_l \geq 0 \\
      (-1)^N\int_T e^{-\Theta\cdot u} dY(e^u), & \sum_{l=1}^N t_l < 0 
   \end{cases}$$
together with its inverse
$$(\mathcal{M}_\Theta^{-1} G)(e^t) \coloneqq \int_{-\infty}^{t}e^{\Theta\cdot u} dG(u)$$
define a bijection between $\Theta$-self-similar fields and fields in $\mathcal{G}_{\Theta,0}$ equipped with the equivalence relation of Definition \ref{defi:GH}.
\begin{proof}
By Lemmas \ref{lemma:Y} and \ref{lemma:G}, it suffices to show that 
$$\mathcal{M}_\Theta \mathcal{M}^{-1}_\Theta G = G \quad\text{and}\quad \mathcal{M}^{-1}_\Theta \mathcal{M}_\Theta  Y = Y.$$ 
Furthermore, Equation \eqref{sidedinverse} of Lemma \ref{lemma:G} gives directly that $\mathcal{M}^{-1}_\Theta \mathcal{M}_\Theta  Y = Y$.
For the remaining claim, by Lemma \ref{lemma:Y}, 
$$Y(e^t) = (\mathcal{M}_\Theta^{-1} G)(e^t)$$
is $\Theta$-self-similar with $dY(e^t) = e^{\Theta\cdot t} dG(t)$. Moreover, by Lemma \ref{lemma:G}, $\tilde{G}(t) \coloneqq (\mathcal{M}_\Theta Y)(t)$ is in $\mathcal{G}_{\Theta,0}$ with 
$$d\tilde{G}(t) = e^{-\Theta \cdot t} dY(e^t) = dG(t).$$
Denote $F  = G-\tilde{G}.$ Then $F(t) = 0$ for all $t$ such that $\sum_l t_l = 0.$ In addition, for every $t$ and $s$,
$$\left[F\right]_s^t = \left[G\right]_s^t - [\tilde{G}]_s^t = \int_s^t dG(t) - \int_s^t d\tilde{G}(t) = 0.$$
This gives
$$[F]_0^t = \sum_{v\subseteq \{1,\dots,N\}} (-1)^{|v|} F(0_v:t_{-v}) = \sum_{\substack{v\subseteq \{1,\dots,N\}\\
|v| \neq N}} (-1)^{|v|} F(0_v:t_{-v})= 0$$
and consequently,
$$F(t) = (-1)^{|v|+1} \sum_{\substack{\emptyset \neq v\subseteq \{1,\dots,N\}\\
|v| \neq N}} F(0_v:t_{-v}) \eqqcolon \sum_{\substack{\emptyset \neq v\subseteq \{1,\dots,N\}\\
|v| \neq N}} f^{(v)}(t_v).$$
That is, $\tilde{G} = G.$ This completes the proof.
\end{proof}
\end{theorem}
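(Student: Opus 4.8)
The plan is to build on Lemmas \ref{lemma:Y} and \ref{lemma:G}, which already establish that $\mathcal{M}_\Theta$ sends $\Theta$-self-similar fields into $\mathcal{G}_{\Theta,0}$ and that $\mathcal{M}_\Theta^{-1}$ sends fields of $\mathcal{G}_{\Theta,0}$ to $\Theta$-self-similar fields. Once both maps are known to be well-defined between the two classes, establishing the bijection reduces to verifying the two composition identities $\mathcal{M}_\Theta^{-1}\mathcal{M}_\Theta Y = Y$ and $\mathcal{M}_\Theta \mathcal{M}_\Theta^{-1} G = G$, where the latter is understood modulo the equivalence relation of Definition \ref{defi:GH}. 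The point is that the stated bijection is between self-similar fields \emph{as such} and $\mathcal{G}_{\Theta,0}$-fields \emph{up to equivalence}, so the quotient is only needed on one side.

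The first identity I would dispatch immediately. Setting $\tilde{G} = \mathcal{M}_\Theta Y$, Lemma \ref{lemma:G} gives $d\tilde{G}(t) = e^{-\Theta\cdot t}\,dY(e^t)$, and the explicit computation \eqref{sidedinverse} in the proof of Lemma \ref{lemma:G} shows that integrating $e^{\Theta\cdot u}$ against this differential from $-\infty$ to $t$ returns $Y(e^t)$ exactly. Hence $\mathcal{M}_\Theta^{-1}\mathcal{M}_\Theta Y = Y$ with no further work.

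For the second identity I would put $Y = \mathcal{M}_\Theta^{-1} G$, which is self-similar with $dY(e^t) = e^{\Theta\cdot t}\,dG(t)$ by Lemma \ref{lemma:Y}, and $\tilde{G} = \mathcal{M}_\Theta Y$, so that $d\tilde{G}(t) = e^{-\Theta\cdot t}\,dY(e^t) = dG(t)$ by Lemma \ref{lemma:G}. Thus $G$ and $\tilde{G}$ carry the same Riemann-Stieltjes differential. Writing $F = G - \tilde{G}$, two facts follow: $F$ vanishes on $\{\sum_l t_l = 0\}$ since both fields lie in $\mathcal{G}_{\Theta,0}$, and every rectangular increment satisfies $[F]_s^t = \int_s^t dG - \int_s^t d\tilde{G} = 0$. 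The crux is then to deduce from the expansion $[F]_0^t = \sum_{v}(-1)^{|v|}F(0_v:t_{-v}) = 0$ that $F$ has the special form of Definition \ref{defi:GH}: the term $v = \{1,\dots,N\}$ equals $(-1)^N F(0) = 0$ and may be dropped, and isolating the $v = \emptyset$ term lets me solve $F(t) = -\sum_{\emptyset\neq v,\,|v|\neq N}(-1)^{|v|}F(0_v:t_{-v})$, where each summand depends only on the coordinates $t_{-v}$, i.e.\ on a proper subset. This is exactly the shape prescribed in Definition \ref{defi:GH}, so $G \sim \tilde{G}$ and the identity holds in the quotient.

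The main obstacle I anticipate is conceptual rather than computational: recognizing that the differential operator $G \mapsto dG$ has kernel precisely the functions of the form appearing in Definition \ref{defi:GH}, so that two $\mathcal{G}_{\Theta,0}$-fields sharing a differential can differ only by such a function. The equivalence relation is engineered exactly to collapse this kernel, which is why the result is a bijection on equivalence classes and not on individual fields. The only delicate bookkeeping is keeping track of which subsets $v$ contribute to $[F]_0^t$ and checking that the constraint $F\rvert_{\sum t = 0} = 0$ is consistent with the solved expression; everything else is routine.
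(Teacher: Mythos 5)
Your proposal is correct and follows essentially the same route as the paper's own proof: reduce to the two composition identities, get $\mathcal{M}_\Theta^{-1}\mathcal{M}_\Theta Y = Y$ from \eqref{sidedinverse}, and for the other direction show $d\tilde{G} = dG$ via Lemmas \ref{lemma:Y} and \ref{lemma:G}, then expand $[F]_0^t = 0$ for $F = G - \tilde{G}$ to exhibit $F$ in the form required by Definition \ref{defi:GH}. Your sign bookkeeping when isolating the $v=\emptyset$ term is in fact slightly cleaner than the paper's (which misplaces the factor $(-1)^{|v|+1}$ outside the sum), and your observation that the quotient is needed only on the $\mathcal{G}_{\Theta,0}$ side matches the intended statement.
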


\begin{kor}
\label{cor:bijection}
Let $\mathcal{L}_\Theta$ be the Lamperti transformation  given in Definition \ref{defi:lamperti} that maps from stationary fields to $\Theta$-self-similar fields. Then $\mathcal{M}_\Theta \circ \mathcal{L}_\Theta$ maps from stationary fields to $\mathcal{G}_{\Theta,0}$ and $\mathcal{L}_\Theta^{-1} \circ \mathcal{M}_\Theta^{-1}$ maps from $\mathcal{G}_\Theta$ to stationary fields. Moreover, $\mathcal{M}_\Theta \circ \mathcal{L}_\Theta$ is a bijection between stationary fields and fields in $\mathcal{G}_{\Theta,0}$.
\end{kor}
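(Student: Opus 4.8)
The plan is to observe that the statement is purely a composition-of-bijections argument, with all the analytic content already carried by Theorem \ref{theorem:lamperti} and Theorem \ref{theorem:bijection}. First I would recall that Theorem \ref{theorem:lamperti} shows that the Lamperti transformation $\mathcal{L}_\Theta$ sends stationary fields to $\Theta$-self-similar fields and that $\mathcal{L}_\Theta^{-1}$ (given explicitly in Definition \ref{defi:lamperti}) sends $\Theta$-self-similar fields back to stationary fields; since the explicit formulas yield $e^{\Theta\cdot t}e^{-\Theta\cdot t}=1$, one checks directly that $\mathcal{L}_\Theta^{-1}\mathcal{L}_\Theta=\mathrm{id}$ and $\mathcal{L}_\Theta\mathcal{L}_\Theta^{-1}=\mathrm{id}$, so $\mathcal{L}_\Theta$ is a genuine bijection between the two classes.

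Next I would invoke Theorem \ref{theorem:bijection}, which establishes that $\mathcal{M}_\Theta$ is a bijection between $\Theta$-self-similar fields and fields in $\mathcal{G}_{\Theta,0}$ equipped with the equivalence relation of Definition \ref{defi:GH}, with inverse $\mathcal{M}_\Theta^{-1}$. Composing, $\mathcal{M}_\Theta\circ\mathcal{L}_\Theta$ maps a stationary field first to a $\Theta$-self-similar field and then to a field in $\mathcal{G}_{\Theta,0}$, which establishes the claimed domain and codomain; symmetrically, $\mathcal{L}_\Theta^{-1}\circ\mathcal{M}_\Theta^{-1}$ maps a field in $\mathcal{G}_\Theta$ to a $\Theta$-self-similar field (by Lemma \ref{lemma:Y}, whose input class is the full $\mathcal{G}_\Theta$) and then to a stationary field. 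Since a composition of two bijections is a bijection and the inverse of a composition is the composition of the inverses in reverse order, $\mathcal{M}_\Theta\circ\mathcal{L}_\Theta$ is a bijection with inverse $\mathcal{L}_\Theta^{-1}\circ\mathcal{M}_\Theta^{-1}$.

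The only point requiring care---and what I would single out as the main subtlety rather than a genuine obstacle---is keeping the two nested classes $\mathcal{G}_{\Theta,0}\subseteq\mathcal{G}_\Theta$ and the equivalence relation straight. The forward map $\mathcal{M}_\Theta\circ\mathcal{L}_\Theta$ lands in $\mathcal{G}_{\Theta,0}$, so the bijection statement is between stationary fields and $\mathcal{G}_{\Theta,0}$ modulo equivalence, whereas the inverse $\mathcal{L}_\Theta^{-1}\circ\mathcal{M}_\Theta^{-1}$ is legitimately defined on the larger class $\mathcal{G}_\Theta$, since Lemma \ref{lemma:Y} only requires $G\in\mathcal{G}_\Theta$ to produce a $\Theta$-self-similar field. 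When restricted to $\mathcal{G}_{\Theta,0}$, these two maps are mutually inverse in the equivalence-class sense already verified in Theorem \ref{theorem:bijection} via $\mathcal{M}_\Theta\mathcal{M}_\Theta^{-1}G=G$ and $\mathcal{M}_\Theta^{-1}\mathcal{M}_\Theta Y=Y$, and precomposing with the exact bijection $\mathcal{L}_\Theta$ preserves this. No new estimates or integral computations are needed; the corollary is a formal consequence of the two preceding bijection theorems.
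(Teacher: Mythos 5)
Your proposal is correct and follows exactly the route the paper intends (the corollary is stated without proof precisely because it is the formal composition of the Lamperti bijection from Theorem \ref{theorem:lamperti} with the bijection $\mathcal{M}_\Theta$ from Theorem \ref{theorem:bijection}). Your added care about the distinction between $\mathcal{G}_\Theta$ and $\mathcal{G}_{\Theta,0}$ with its equivalence relation, and the observation that $\mathcal{M}_\Theta^{-1}$ is legitimately defined on all of $\mathcal{G}_\Theta$ via Lemma \ref{lemma:Y}, matches the paper's setup precisely.
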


\begin{theorem}
\label{theorem:1}
Let $\Theta \in(0, \infty)^{N}$ and let $X = \{X(t)\}_{t\in\mathbb{R}^N}$ be stationary. Then $X$ satisfies the Langevin equation \eqref{langevinequation} for $G = ( \mathcal{M}_\Theta \circ \mathcal{L}_\Theta)(X)\in\mathcal{G}_{\Theta,0}$.

\begin{proof}
By the Lamperti theorem, $X(t)=(\mathcal{L}_\Theta^{-1} Y)(t) = e^{-\Theta \cdot t} Y(e^{t})$ for a unique $\Theta$-self-similar $Y$. Denote $f(t)=e^{-\Theta \cdot t}$ and $Z(t)=Y(e^{t})$. By Theorem \ref{theorem:productrule}, the Langevin equation may be written as 
\begin{equation}
\label{modlangevin}
\begin{split}
dG(t) &= \sum_{u\subseteq\{1,\dots,N\}} \prod _{i\in u}\theta_i d_{-u}X(t) dt_u =\sum_{u\subseteq\{1,\dots,N\}} \prod _{i\in u}\theta_i d_{-u}(f(t)Z(t)) dt_u \\
&= \sum_{u\subseteq\{1,\dots,N\}} (-1)^{|u|}d_{-u}(f_{t_u}(t)Z(t)) dt_u\\
&= \sum_{u\subseteq\{1,\dots,N\}} (-1)^{|u|}\sum_{v \subseteq -u}f_{t_{u+v}}(t)d_{-u-v}Z(t) dt_vdt_u\\
&= \sum_{u\subseteq\{1,\dots,N\}} (-1)^{|u|}\sum_{v \subseteq -u}f_{t_{u+v}}(t)d_{-u-v}Z(t) dt_{u+v}.
\end{split}
\end{equation}
If $u=v=\emptyset$, then the corresponding term above is $f(t)dZ(t)$. Next, let $u+v = q \neq \emptyset$ be fixed. Now, $v = q-u$ and \eqref{modlangevin} turns to
\begin{equation}
\label{fixedsum}
\sum_{u\subseteq\{1,\dots,N\}} (-1)^{|u|}f_{t_{q}}(t)d_{-q}Z(t) dt_{q}.
\end{equation}
As before, applying Lemma \ref{binomial} one sees that the terms $f_{t_{q}}(t)d_{-q}Z(t) dt_{q}$ vanish and as such,  \eqref{modlangevin} reads
\begin{equation*}
f(t) d Z(t) =e^{-\Theta \cdot t} dY(e^{t}) = dG(t). 
\end{equation*}
Defining $G\in\mathcal{G}_{\Theta,0}$ according to Lemma \ref{lemma:G} completes the proof.
\end{proof}
\end{theorem}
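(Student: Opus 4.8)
The plan is to reduce everything to the self-similar picture via the Lamperti transformation and then verify the required differential identity by a direct expansion through the product rule. By Theorem \ref{theorem:lamperti}, stationarity of $X$ is equivalent to $Y := \mathcal{L}_\Theta X$ being $\Theta$-self-similar, and then $X(t) = e^{-\Theta\cdot t}Y(e^t)$. Writing $f(t) = e^{-\Theta\cdot t}$ and $Z(t) = Y(e^t)$, so that $X = fZ$, the target field $G = (\mathcal{M}_\Theta\circ\mathcal{L}_\Theta)(X) = \mathcal{M}_\Theta Y$ lies in $\mathcal{G}_{\Theta,0}$ and satisfies $dG(t) = e^{-\Theta\cdot t}dY(e^t) = f(t)\,dZ(t)$ by Lemma \ref{lemma:G}. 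Hence the whole statement reduces to proving the differential identity
\begin{equation*}
\sum_{u\subseteq\{1,\dots,N\}}\prod_{i\in u}\theta_i\, d_{-u}(f(t)Z(t))\, dt_u = f(t)\, dZ(t)
\end{equation*}
in the multiple Riemann-Stieltjes sense, i.e. after integrating both sides against an arbitrary test integrand $h$ with $h_t$ continuous.

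The structural fact I would exploit is that $f(t) = e^{-\Theta\cdot t}$ has especially simple mixed partials, namely $f_{t_u}(t) = (-1)^{|u|}\prod_{i\in u}\theta_i\, f(t)$. This lets me rewrite each summand as $\prod_{i\in u}\theta_i\, d_{-u}(fZ) = (-1)^{|u|}d_{-u}(f_{t_u}Z)$, since the constant $\prod_{i\in u}\theta_i$ passes through the differential $d_{-u}$ and $(-1)^{|u|}f_{t_u} = \prod_{i\in u}\theta_i f$ as functions. I would then apply the product rule of Theorem \ref{theorem:productrule} in the coordinates $-u$ (with the coordinates $u$ frozen) to expand $d_{-u}(f_{t_u}Z)$, using $(f_{t_u})_{t_v} = f_{t_{u+v}}$. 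This turns the left-hand side into a double sum over $u\subseteq\{1,\dots,N\}$ and $v\subseteq -u$ of terms $(-1)^{|u|}f_{t_{u+v}}(t)\, d_{-u-v}Z(t)\, dt_{u+v}$.

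The final step is a reindexing and cancellation. I would group the double sum according to $q := u+v$; for fixed $q$ the index $u$ then ranges over all subsets of $q$ (with $v = q-u$), producing the common factor $f_{t_q}(t)\, d_{-q}Z(t)\, dt_q$ multiplied by the scalar $\sum_{u\subseteq q}(-1)^{|u|}$. By Lemma \ref{binomial} this scalar vanishes for every $q\neq\emptyset$ and equals $1$ for $q=\emptyset$, so only the $q=\emptyset$ term survives, leaving exactly $f(t)\,dZ(t) = dG(t)$. I expect the main obstacle to be the bookkeeping in this last step --- keeping the index sets, the signs, and the distinction between the Riemann-Stieltjes differentials $d_{-q}Z$ and the ordinary differentials $dt_q$ straight --- rather than any genuine analytic difficulty, because the product rule and the needed regularity ($f$ smooth, $Z = Y(e^{\cdot})$ continuous by the standing assumption) are already available. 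A minor point to confirm is that every intermediate manipulation is legitimate as an identity tested against $h$, which is guaranteed since each step is covered by Theorems \ref{lemma:radon}, \ref{theorem:radon2}, and \ref{theorem:productrule}.
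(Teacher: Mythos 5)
Your proposal is correct and follows essentially the same route as the paper's own proof: Lamperti transformation to reduce to the self-similar field $Y$, the identity $f_{t_u}(t) = (-1)^{|u|}\prod_{i\in u}\theta_i f(t)$ for $f(t)=e^{-\Theta\cdot t}$, expansion via the product rule of Theorem \ref{theorem:productrule}, reindexing by $q=u+v$ with cancellation from Lemma \ref{binomial}, and finally Lemma \ref{lemma:G} to identify $f(t)\,dZ(t)$ with $dG(t)$ and to place $G$ in $\mathcal{G}_{\Theta,0}$. The only cosmetic difference is that you invoke Lemma \ref{lemma:G} at the outset rather than at the end, which if anything makes the logical structure slightly more transparent.
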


\begin{theorem}
\label{theorem:2}
Let $\Theta \in(0, \infty)^{N}$ and $G \in \mathcal{G}_{\Theta}$. Then, the corresponding Langevin equation \eqref{langevinequation} has a unique stationary solution
$$X(t) = \left((\mathcal{L}_\Theta^{-1} \circ \mathcal{M}_\Theta^{-1} )(G)\right)(t) =  e^{-\Theta\cdot t}\int_{-\infty}^{t} e^{\Theta\cdot u} dG(u).$$
\begin{proof}
First, the field $X$ is stationary, since $X = (\mathcal{L}_\Theta^{-1} \circ \mathcal{M}_\Theta^{-1} )(G).$ In addition, by Theorem \ref{theorem:1}, $X$ satisfies the Langevin equation for $ \tilde{G} = ( \mathcal{M}_\Theta \circ \mathcal{L}_\Theta)(X)\in \mathcal{G}_{\Theta,0}$. By denoting $\mathcal{L}_\Theta(X) = Y = \mathcal{M}_\Theta^{-1}(G)$, we may write $\tilde{G} = \mathcal{M}_\Theta(Y).$ Now, as in the proof of Theorem \ref{theorem:bijection}, we get $d\tilde{G}(t) = dG(t).$ Hence, $X$ is a solution to the Langevin equation also for $G$.

Next, we prove the uniqueness. Assume that $\tilde{X}$ is another stationary solution. Then, $\tilde{X}$ satisfies the Langevin equation also for $\tilde{G} = ( \mathcal{M}_\Theta \circ \mathcal{L}_\Theta)(\tilde{X})\in\mathcal{G}_{\Theta,0}$. Thus, $d\tilde{G}(t) = dG(t)$ implying $\tilde{X} = (\mathcal{L}_\Theta^{-1} \circ \mathcal{M}_\Theta^{-1} )(\tilde{G}) = (\mathcal{L}_\Theta^{-1} \circ \mathcal{M}_\Theta^{-1} )(G) = X$ by the definition of the transformation $\mathcal{M}^{-1}$.
\end{proof}
\end{theorem}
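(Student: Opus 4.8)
The plan is to assemble the statement directly from the transformation machinery already in place, namely Lemmas \ref{lemma:Y} and \ref{lemma:G}, Theorem \ref{theorem:1}, and the bijection of Corollary \ref{cor:bijection}, splitting the argument into an existence part and a uniqueness part.

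For existence, I would first set $Y = \mathcal{M}_\Theta^{-1}(G)$, which is well-defined precisely because $G\in\mathcal{G}_\Theta$ guarantees convergence of the improper integral $\int_{-\infty}^t e^{\Theta\cdot u}\,dG(u)$; by Lemma \ref{lemma:Y} this $Y$ is $\Theta$-self-similar and satisfies $dY(e^t)=e^{\Theta\cdot t}\,dG(t)$. Applying the Lamperti theorem (Theorem \ref{theorem:lamperti}) then produces the stationary candidate $X=\mathcal{L}_\Theta^{-1}(Y)=(\mathcal{L}_\Theta^{-1}\circ\mathcal{M}_\Theta^{-1})(G)$. To verify that this $X$ actually solves \eqref{langevinequation} for $G$, I would invoke Theorem \ref{theorem:1}: being stationary, $X$ satisfies the Langevin equation for $\tilde{G}:=(\mathcal{M}_\Theta\circ\mathcal{L}_\Theta)(X)=\mathcal{M}_\Theta(Y)\in\mathcal{G}_{\Theta,0}$. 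The crux is then to identify the driving differential: combining $dY(e^t)=e^{\Theta\cdot t}\,dG(t)$ from Lemma \ref{lemma:Y} with $d\tilde{G}(t)=e^{-\Theta\cdot t}\,dY(e^t)$ from Lemma \ref{lemma:G} gives $d\tilde{G}(t)=dG(t)$. Interpreting the equation as in Remark \ref{rem:interpretation}, i.e. after integration from $0$ to $t$, the identity $\int_0^t d\tilde{G}=\int_0^t dG$ shows that the equation $X$ solves for $\tilde{G}$ is the same as the one for $G$.

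For uniqueness, suppose $\tilde{X}$ is any stationary solution of \eqref{langevinequation} for $G$. By Theorem \ref{theorem:1} the same field $\tilde{X}$ also solves the equation for $\hat{G}:=(\mathcal{M}_\Theta\circ\mathcal{L}_\Theta)(\tilde{X})\in\mathcal{G}_{\Theta,0}$, so equating the two right-hand sides yields $d\hat{G}(t)=dG(t)$. Since $\tilde{X}=(\mathcal{L}_\Theta^{-1}\circ\mathcal{M}_\Theta^{-1})(\hat{G})$ by Corollary \ref{cor:bijection}, it remains to check that $(\mathcal{L}_\Theta^{-1}\circ\mathcal{M}_\Theta^{-1})(\hat{G})=(\mathcal{L}_\Theta^{-1}\circ\mathcal{M}_\Theta^{-1})(G)$. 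This holds because $\mathcal{M}_\Theta^{-1}(\cdot)(e^t)=\int_{-\infty}^t e^{\Theta\cdot u}\,d(\cdot)(u)$ depends on its argument only through the multiple Riemann-Stieltjes differential, and $d\hat{G}=dG$ forces every approximating Riemann-Stieltjes sum, and hence the limit, to agree. Therefore $\tilde{X}=X$.

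The step carrying the real weight is the identification $d\tilde{G}=dG$ together with the passage from \emph{same differential} to \emph{same transformed field}. The delicacy is that $G$ lies in $\mathcal{G}_\Theta$ rather than in the quotient space $\mathcal{G}_{\Theta,0}$ on which the bijection of Corollary \ref{cor:bijection} is stated, so one cannot directly conclude $\tilde{G}=G$ as functions: they may differ by a sum of lower-dimensional terms of the type in Definition \ref{defi:GH}. The resolution is exactly that such discrepancies have vanishing rectangular increments (Remark \ref{rem:equivalence}) and are therefore invisible both to the Langevin equation and to the integral defining $\mathcal{M}_\Theta^{-1}$; making this explicit at the level of the defining Riemann-Stieltjes sums is where some care is needed.
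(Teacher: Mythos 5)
Your proposal is correct and follows essentially the same route as the paper: existence via Theorem \ref{theorem:1} combined with the identification $d\tilde{G}=dG$ from Lemmas \ref{lemma:Y} and \ref{lemma:G} (which is exactly what the paper invokes by referring back to the proof of Theorem \ref{theorem:bijection}), and uniqueness by noting that any stationary solution solves the equation for its own $(\mathcal{M}_\Theta\circ\mathcal{L}_\Theta)$-image, whose differential must then coincide with $dG$, so that $\mathcal{M}_\Theta^{-1}$ --- depending on its argument only through the differential --- returns the same field. Your closing remark about $G\in\mathcal{G}_\Theta$ versus the quotient $\mathcal{G}_{\Theta,0}$ and the invisibility of lower-dimensional perturbations is a faithful elaboration of what the paper leaves implicit in the phrase ``by the definition of the transformation $\mathcal{M}^{-1}$.''
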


We end the paper by considering the uniqueness of the solution to the Langevin equation. It turns out that the solution is unique up to certain lower-dimensional components that can be eliminated by posing appropriate ''boundary conditions''.
\begin{defi}
\label{defi:uniqueness}
    We say that fields $X(t)$ and $Y(t)$ are equivalent with respect to the parameter $\Theta$ if the field
    $$
    Z(t) = e^{\Theta \cdot t}[X(t)- Y(t)] 
    $$
    satisfies $[Z]_s^t = 0$ almost surely for all $s,t\in \mathbb{R}^N$.
\end{defi}
The above definition means that we have
\begin{equation}
\label{eq:XY-link}
X(t) = Y(t) + e^{-\Theta \cdot t}\sum_{\substack{u\subseteq \{1,\dots,N\}\\
|u| \neq N}}H^{(u)}(t_u)
\end{equation}
for some fields$H^{(u)}(t_u)$, where now $u$ is a proper subset of $\{1,\ldots,N\}$. The choice $u = \emptyset$ corresponds to the constant random variable $H^{(\emptyset)} (t_\emptyset) =C$. Note that the above definition is natural, as we cannot distinguish fields $H^{(u)}(t_u)$ in the multiple Riemann-Stieltjes sense. Indeed, their rectangular increments equal zero, since each $H^{(u)}(t_u)$ does not depend on all the variables, and are thus essentially lower-dimensional objects.

\begin{lemma}
\label{lemma:uniqueness}
The Langevin equation \eqref{langevinequation} admits a unique solution in the sense of Definition \ref{defi:uniqueness}.
\end{lemma}
\begin{proof}
Suppose that $X$ and $\tilde{X}$ are two solutions and set $Z= X-\tilde{X}$. Then, from the linearity of the equation, we obtain that $Z$ solves
\begin{equation*}
\sum_{u\subseteq\{1,\dots,N\}} \prod _{i\in u}\theta_i d_{-u}Z(t) dt_u = 0.
\end{equation*}
Multiplying with $e^{\Theta\cdot t}$ yields
\begin{equation*}
\begin{split}
\sum_{u\subseteq\{1,\dots,N\}} \prod _{i\in u}\theta_ie^{\Theta\cdot t} d_{-u}Z(t) dt_u &= \sum_{u\subseteq\{1,\dots,N\}} \frac{d e^{\Theta\cdot t}}{dt_u} d_{-u}Z(t) dt_u = d(e^{\Theta\cdot t}Z(t)) = 0
\end{split}
\end{equation*}
by Theorem \ref{theorem:productrule} under the standing assumption of almost surely continuous realizations. This gives $[e^{\Theta \cdot r}Z(r)]_s^t = 0$ for all $s,t$, and as in the proof of Theorem \ref{theorem:bijection}, 
\begin{equation*}
e^{\Theta\cdot t}Z(t) = \sum_{\substack{u\subseteq \{1,\dots,N\}\\
|u| \neq N}}H^{(u)}(t_u).
\end{equation*}
That is, the solution is unique in the sense of Definition \ref{defi:uniqueness}, see \eqref{eq:XY-link}. This completes the proof.
\end{proof}

\begin{rem}
By fixing some (random) ''boundary conditions'', we may eliminate some of the fields $H^{(u)}(t_u)$. For example, if in the two-dimensional setting $X(0,t_2) = X_2(t_2)$, we obtain that
$$H^{(2)}(t_2) = -H^{(1)}(0) - C.$$
From which,
$$X(t_1,t_2) =Y(t_1,t_2) + e^{-\Theta \cdot t} \left(H^{(1)}(t_1) - H^{(1)}(0)\right)=Y(t_1,t_2) + e^{-\Theta \cdot t} \left(H^{(1)}(t_1)  +\tilde{C}\right).$$
Note that, in the one-dimensional case, we would have
$$
X(t) = Y(t) + Ce^{-\theta t}
$$
from which the condition $X(0)=Y(0)$ would give uniqueness in the classical sense, i.e. $X(t)=Y(t)$ almost surely for all $t$.
\end{rem}

%\begin{theorem}
%\label{theorem:noiseG}
%Let $\Theta\in(0,\infty)^N$ and let $X = \{X(t)\}_{t\in\mathbb{R}^N}$ be stationary. Then, $X$ is the unique solution to the Langevin equation \eqref{langevinequation} for $G = ( \mathcal{M}_\Theta \circ \mathcal{L}_\Theta)(X)\in\mathcal{G}_\Theta$ with the ''initial condition'' $X(0) = \int_{-\infty}^0\dots \int_{-\infty}^0 e^{\Theta \cdot u} dG(u)$.
%\begin{proof}
%The proof of Theorem \ref{theorem:1} together with Lemma \ref{lemma:G} shows that if $X$ is stationary, then it satisfies the Langevin equation \eqref{langevinequation} for $G = ( \mathcal{M}_\Theta \circ \mathcal{L}_\Theta)(X)$. Uniqueness follows from Lemma \ref{lemma:uniqueness}.
%\end{proof}
%\end{theorem}

%\begin{theorem}
%\label{theorem:2}
%Let $\Theta \in(0, \infty)^{N}$ and $G \in \mathcal{G}_{\Theta}$. Then, the unique solution to the corresponding Langevin equation \eqref{langevinequation} with the ''initial condition'' $X(0) = \int_{-\infty}^0 e^{\Theta \cdot u} dG(u)$ is
%\begin{equation}
%\label{solution}
%X(t) =  e^{-\Theta\cdot t}\int_{-\infty}^{t} e^{\Theta\cdot u} dG(u).
%\end{equation}
%Moreover, the solution is stationary.
%\begin{proof}
%The field $X$ is stationary, since $X = (\mathcal{L}_\Theta^{-1} \circ \mathcal{M}_\Theta^{-1} )(G).$ In addition, by Theorem \ref{theorem:noiseG}, $X$ satisfies the Langevin equation for $ G = ( \mathcal{M}_\Theta \circ \mathcal{L}_\Theta)(X)$. Again, uniqueness follows from Lemma \ref{lemma:uniqueness}.
%\end{proof}
%\end{theorem}

\begin{theorem}
\label{theorem:supreme}
Let $\Theta\in(0,\infty)^N$. A field $X = \{X(t)\}_{t\in\mathbb{R}^N}$ is stationary if and only if it is the unique solution to the Langevin equation \eqref{langevinequation} for $G\in\mathcal{G}_{\Theta,0}$ with the following ''boundary condition'': $X(t) = e^{-\Theta \cdot t}\int_{-\infty}^t e^{\Theta \cdot u} dG(u)$ for all $t$ such that $t_l=0$ at least for one $l\in\{1,\dots,N\}$. That is, 
$$X(t) = \left((\mathcal{L}_\Theta^{-1} \circ \mathcal{M}_\Theta^{-1} )(G)\right)(t) =  e^{-\Theta\cdot t}\int_{-\infty}^{t} e^{\Theta\cdot u} dG(u)\quad\text{for all } t\in\mathbb{R}^N.$$
Moreover, the field $G = \{G(t)\}_{t\in\mathbb{R}^N}$ in the representation is unique and expressible as
$$G(t) = \left(( \mathcal{M}_\Theta \circ \mathcal{L}_\Theta)(X)\right)(t).$$
\begin{proof}
Theorem \ref{theorem:1} shows that stationary fields solve the Langevin equation with the given boundary condition. Moreover, Theorem \ref{theorem:2} shows that there exists a stationary solution that satisfies the boundary condition. Hence, it remains to show that the solution under the boundary condition is unique. Based on the proof of Lemma \ref{lemma:uniqueness}, let 
\begin{equation*}
W(t) = e^{\Theta\cdot t}Z(t) = e^{\Theta\cdot t}(X(t)- \tilde{X}(t)) =  \sum_{\substack{u\subseteq \{1,\dots,N\}\\
|u| \neq N}}H^{(u)}(t_u). 
\end{equation*}
Let $0$ be the zero-vector of length $N$. Then, by the boundary condition, $W(0_v:t_{-v}) = 0$ for all $v\neq \emptyset$ and $t_{-v}\in\mathbb{R}^{|-v|}$. Thus,
\begin{equation*}
\begin{split}
&\sum_{\emptyset\neq v\subseteq \{1,\dots,N\}} (-1)^{|v|+1} W(0_v:t_{-v})\\ &= \sum_{\emptyset\neq v \subseteq \{1,\dots,N\}} (-1)^{|v|+1}\sum_{\substack{u\subseteq \{1,\dots,N\}\\
|u| \neq N}}H^{(u)}((0_v:t_{-v})_u)\\
&= \sum_{\substack{u\subseteq \{1,\dots,N\}\\
|u| \neq N}} \sum_{\emptyset\neq v \subseteq \{1,\dots,N\}}(-1)^{|v|+1}H^{(u)}((0_v:t_{-v})_u)=0.
\end{split}
\end{equation*}
Let $u\subset \{1,\dots,N\}$ be fixed above. This also fixes the field $H^{(u)}$. In the inner sum, $v$ and $v'$ are equivalent under $u$ if $(0_v:t_{-v})_u = (0_{v'}:t_{-v'})_u = q$, where $q$ is some $|u|$-vector consisting of zeros and the elements of $t\in\mathbb{R}^N$. Thus, the inner sum can be written as
\begin{equation*}
 \sum_{\emptyset\neq v \subseteq \{1,\dots,N\}}(-1)^{|v|+1}H^{(u)}((0_v:t_{-v})_u) = \sum_q H^{(u)}(q) \sum_{\substack{\emptyset\neq v \subseteq \{1,\dots,N\}\\
(0_v:t_{-v})_u = q}}(-1)^{|v|+1}.
\end{equation*}
Let $n$ be the number of zeros in $q$. If $n \geq 1$, then since the elements corresponding to $u$ are fixed and the elements corresponding to $-u$ can be either zeros or elements of $t$, we obtain that
\begin{equation*}
\sum_{\substack{\emptyset\neq v \subseteq \{1,\dots,N\}\\
(0_v:t_{-v})_u = q}}(-1)^{|v|+1} = (-1)^{n+1} \sum_{m=0}^{|-u|} (-1)^m\binom{|-u|}{m}= 0
\end{equation*}
by Lemma \ref{binomial}. If $n=0$, then since $v\neq \emptyset$, we get
\begin{equation*}
\sum_{\substack{\emptyset\neq v \subseteq \{1,\dots,N\}\\
(0_v:t_{-v})_u = q}}(-1)^{|v|+1} = \sum_{m=1}^{|-u|} (-1)^{m+1}\binom{|-u|}{m}=1+ (-1) \sum_{m=0}^{|-u|} (-1)^{m}\binom{|-u|}{m}=1.
\end{equation*}
Note that the above also holds in the case of $u = \emptyset$. Combining the previous results yields that
\begin{equation*}
\sum_{\emptyset\neq v\subseteq \{1,\dots,N\}} (-1)^{|v|+1} W(0_v:t_{-v}) = \sum_{\substack{u\subseteq \{1,\dots,N\}\\
|u| \neq N}}H^{(u)}(t_u) = W(t) = e^{\Theta\cdot t}(X(t)- \tilde{X}(t))
\end{equation*}
equals zero for all $t$, proving the uniqueness of the solution under the given boundary condition.

Lastly, we prove the uniqueness of $G$ in the representation. Assume that stationary $X$ satisfies the Langevin equation for $G\in\mathcal{G}_{\Theta,0}$ and $\tilde{G}\in\mathcal{G}_{\Theta,0}$. Then
$$X = (\mathcal{L}_\Theta^{-1} \circ \mathcal{M}_\Theta^{-1} )(G) = (\mathcal{L}_\Theta^{-1} \circ \mathcal{M}_\Theta^{-1} )(\tilde{G})$$
yielding $G = \tilde{G}$ by bijectivity of $\mathcal{L}_\Theta^{-1} \circ \mathcal{M}_\Theta^{-1}$. This completes the proof.
\end{proof}
\end{theorem}

\bibliographystyle{plain}
\bibliography{pipliateekki}
\end{document}